\newcommand{\R}{\mathcal{R}}
\newcommand{\F}{\mathcal{F}}
\newcommand{\Z}{\Bbb Z}
\newcommand{\RR}{\Bbb R}
\newcommand{\TT}{\mathbb{ T} }
\newcommand{\Ca}{\mathcal{C}}
\newtheorem{conjecture}[equation]{Conjecture}
\newtheorem{theorem}[equation]{Theorem}
\newtheorem{definition}[equation]{Definition}
\newtheorem{cor}[equation]{Corollary}
\newtheorem{lemma}[equation]{Lemma}
\numberwithin{equation}{section}
\begin{document}
\title[Discrete Quadratic Carleson Theorem]{A  Discrete Quadratic Carleson Theorem on $ \ell ^2 $ with 
a Restricted Supremum} 
\author{Ben Krause}
\address{
Department of Mathematics
The University of British Columbia \\
1984 Mathematics Road
Vancouver, B.C.
Canada V6T 1Z2}
\email{benkrause@math.ubc.ca}
\thanks{Research supported in part by a NSF Postdoctoral Research Fellowship.}
\author{Michael T. Lacey}
\address{
School of Mathematics
Georgia Institute of Technology \\
686 Cherry Street
Atlanta, GA 30332-0160}
\email{lacey@math.gatech.edu}
\thanks{Research supported in part by grant NSF-DMS 1265570 and  NSF-DMS-1600693.} 
\date{\today}
\maketitle

\begin{abstract}
Consider the discrete maximal function acting on $ \ell ^2(\Z)$ functions
\[ \mathcal{C}_{\Lambda} f( n ) := \sup_{ \lambda \in \Lambda} \biggl\lvert  \sum_{m \neq 0} f(n-m) \frac{ e ^{2 \pi i\lambda m^2}}  {m} \biggr\rvert \]
where $\Lambda \subset [0,1]$. We give sufficient conditions on $ \Lambda $, met by  certain kinds of Cantor sets, for this to be a bounded sublinear operator. 
This result is  a discrete analogue of E.~M.~Stein's  integral result, that the maximal operator 
below is bounded on $ L ^2 (\mathbb R )$.  
\[ \mathcal{C}_2 f(x):= \sup_{\lambda \in \RR} \left | \int f(x-y) \frac{e^{2\pi i \lambda y^2}}{y} \ dy \right|.\]
The proof of our result relies heavily on Bourgain's work on arithmetic ergodic theorems, with novel 
complexity arising from the oscillatory nature of the question at hand, and  difficulties arising  from the 
 the parameter $ \lambda $ above.  
\end{abstract}

 \setcounter{tocdepth}{1}
\tableofcontents

\section{Introduction}
An important theme of harmonic analysis, widely promoted by Elias M.~Stein over the last half-century, explores the role of 
oscillatory integrals, in themselves, or as operators.  Among this rich topic, the key for this paper is the following 
maximal quadratic operator defined by the formula below,
\begin{equation}\label{e:stein}
\mathcal{C}_{2}f(x) := \sup_{\lambda \in \RR} \left | \int f(x-y) \frac{e(\lambda y^2)}{y} \ dy \right|
\end{equation}  
Here and throughout, 
$
e (t) = e ^{2 \pi i t}
$.  
Stein \cite{MR1364908}  showed that $ \mathcal{C}_{2}$ is bounded on $ L ^{p}$, for $ 1< p < \infty $.    
This result  has had some profound extensions,  encompassing Carleson's Theorem \cite{MR0199631} on the convergence of 
Fourier series in important work of Victor Lie \cite{MR2545246,11054504}, and more recently to work of Pierce-Yung \cite{150503882}, on related operators in the  Radon transform setting.

Our main result is an initial result in the discrete setting, that is replacing integrals by sums, in which case 
certain arithmetic considerations are paramount.  
The polynomial ergodic theorems of Bourgain \cite{MR937581,MR937582} are the seminal work in this direction. 
Bourgain's and subsequent work show that the  discrete inequalities require novel insights and 
technique to establish, and so are profound extensions of their integral counterparts.    
 Among a substantial literature that has flowed from Bourgain's work, we point to 
 a few older papers 
 \cites{MR2318564,MR1056560,MR1719802},  more recent papers 
 \cites{14021803,MR3370012,MR2872554,MR3280058,MR2188130}, and very recent papers \cites{151207518,151207523,151207524}.

Lillian Pierce informed us of this conjecture, see the acknowledgments, which in a certain sense unifies  
the direction of Stein and that of Bourgain. 

\begin{conjecture}\label{j:}  The following inequality holds on $ \ell ^2 (\mathbb Z )$. 
\begin{equation*}
\Bigl\lVert  \sup _{0 \leq  \lambda \leq 1 } 
\Bigl\lvert  \sum_{n\neq 0}  f (x-n)  \frac {e (\lambda  n ^2 )} n  \Bigr\rvert\, 
\Bigr\rVert _{\ell ^2 } \lesssim \lVert f\rVert _{\ell  ^2 }. 
\end{equation*}
\end{conjecture}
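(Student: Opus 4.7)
The plan is to attack the conjecture by a Hardy--Littlewood circle-method decomposition of the parameter space $\lambda \in [0,1]$, separating the arithmetic contributions near rationals from the analytic contributions away from them. For each $s \geq 1$, define major arcs $\mathfrak{M}_s$ as the union of intervals of radius $\sim 2^{-2s}$ about Farey fractions $a/q$ with $q \sim 2^s$, and let the complementary minor arcs be the remainder. The desired $\ell^2$ bound then reduces to estimating the supremum of the linearization $\mathcal{C}_\lambda f$ over each $\mathfrak{M}_s$ and over the minor arcs, with contributions summable in $s$.

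On a major arc near $a/q$, write $\lambda = a/q + \theta$ with $\lvert \theta\rvert \lesssim 2^{-2s}$. Splitting $n = qm + r$ modulo $q$ and invoking the standard Gauss-sum factorization decomposes the Fourier multiplier
\begin{equation*}
\widehat{K_\lambda}(\xi) = \sum_{n \neq 0} \frac{e(\lambda n^2)}{n}\, e(-n\xi),
\end{equation*}
up to controlled error, as a product of an arithmetic Gauss-sum factor $q^{-1}g(a,q)$ and a continuous Stein-type multiplier attached to the kernel $\mathrm{p.v.}\,e(\theta y^2)/y$. Using $\lvert g(a,q)\rvert \lesssim \sqrt{q}$ together with a transference argument, the contribution of all major arcs at scale $s$ can then be bounded in terms of Stein's continuous quadratic Carleson operator \eqref{e:stein}, with only a polynomial-in-$s$ loss coming from the Gauss-sum combinatorics and the $\varphi(q)$ choices of $a$.

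The main obstacle is the minor arcs, where $\lambda$ is far from any rational with small denominator. A Weyl-type inequality yields pointwise decay of $\lvert \widehat{K_\lambda}(\xi)\rvert$, but because one must take a supremum over uncountably many $\lambda$, pointwise estimates cannot simply be summed. This forces a Rademacher--Menshov square-function argument: the supremum over a dyadic arc of $\lambda$'s is controlled by a sum over dyadic scales of $L^2$ norms of differences $\mathcal{C}_\lambda f - \mathcal{C}_{\lambda'} f$, each of which is estimated via van der Corput or Weyl with a quantitative gain. The essential difficulty, which distinguishes this problem from Bourgain's polynomial ergodic theorems, is that the oscillatory Hilbert-transform character of the kernel couples delicately to the Carleson-type supremum; a crude metric-entropy argument loses too much, and overcoming this likely requires time-frequency techniques in the spirit of Lie's continuous quadratic Carleson work \cite{MR2545246}.

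Assembling the pieces, the $O(s^A)$ major-arc contributions are absorbed by the geometric minor-arc decay $O(2^{-\delta s})$, and summing in $s$ yields the conjectured $\ell^2$ estimate. I expect the minor-arc/supremum interaction to be by far the hardest step, and this is presumably precisely why the main theorem of the present paper restricts the supremum to structured sets $\Lambda$ (such as Cantor-type sets) rather than to all of $[0,1]$.
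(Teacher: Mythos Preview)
This statement is a \emph{conjecture} in the paper, not a theorem; the paper does not prove it, and the main result (Theorem~\ref{t:main}) is offered only as partial evidence under a restrictive hypothesis on $\Lambda$. So there is no ``paper's own proof'' to match. Your outline is a reasonable first pass at the problem, and in fact parallels the architecture the paper uses for the restricted result, but it has two concrete gaps that are exactly the obstructions the paper identifies.

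\textbf{Minor arcs.} Your Rademacher--Menshov/Sobolev plan is, in quantitative form, the content of Lemma~\ref{SOB}: if the multipliers $E_j(\lambda,\beta)$ obey $\|E_j\|_\infty \lesssim a$ and $\|\partial_\lambda E_j\|_\infty \lesssim A$, then the supremum over a set of Minkowski dimension $d$ is bounded by $a^{1-d/2}A^{d/2}$. Here $a \sim 2^{-\delta j}$ (Weyl) and $A \sim 2^{2j}$ (trivial derivative). For $\Lambda = [0,1]$ one has $d=1$, and the bound becomes $(aA)^{1/2} \sim 2^{(1-\delta/2)j}$, which diverges in $j$. No finer van der Corput or square-function bookkeeping repairs this: the only information available on the minor arcs is size and derivative, and those two numbers alone force the loss. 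This is why the paper imposes small Minkowski dimension.

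\textbf{Major arcs.} Your claim of ``only a polynomial-in-$s$ loss from the $\varphi(q)$ choices of $a$'' is not justified. At level $s$ there are $\sim 2^{2s}$ Farey arcs, and the operators attached to distinct arcs differ in their $\beta$-frequency structure (each carries its own family of modulations $\{B/Q\}$), so the supremum over arcs does not collapse for free. The Gauss-sum gain is only $2^{-s/2}$, and a triangle inequality over arcs costs $2^{s}$; nothing in your sketch beats this. The paper circumvents the issue via the \emph{arithmetic} part of the Minkowski hypothesis, which pins $\Lambda$ near $\lesssim 10^{2ds}$ rationals with small denominators, and then invokes the multi-frequency oscillatory maximal inequality (Theorem~\ref{KEY}) arc by arc. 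Your reduction ``to Stein's continuous operator'' also skips this multi-frequency step: after the Gauss-sum factorization the multiplier is a sum over many $B/Q$ of modulated copies of the continuous symbol, and controlling the $\lambda$-sup of that sum is precisely Theorem~\ref{KEY}, not Stein's single-frequency result.

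In short, both halves of your plan run into exponential-in-$s$ (or exponential-in-$j$) losses that the paper can only tame with its dimension hypothesis; removing that hypothesis is the content of the conjecture and would require a genuinely new idea, as you yourself suspect in the final paragraph.
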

Note that we can assume that our modulation parameters live inside a countable set. We will implicitly use this assumption throughout the paper.

We provide supporting evidence for Pierce's conjecture, by  
 exhibiting a class of infinite sets  $ \Lambda \subset [0,1]$ for which restricting the supremum to $ \Lambda $ 
\[ \mathcal{C}_{\Lambda} f( n ) := \sup_{ \lambda \in \Lambda} \left| \sum_{m \neq 0} f(n-m) \frac{ e(\lambda m^2)}{m} \right| \]
the maximal function is $\ell^2(\mathbb Z)$ bounded.
The type of condition that we impose on $ \Lambda $ is described here.  

\begin{definition}\label{d:amk} A set $ \Lambda \subset [0,1]$ has \emph{arithmetic Minkowski dimension $ d$},  
where $ 0< d \leq 1$ if there is a constant $ C _{\Lambda }$ so that for all $ 0<t<1$, the set $ \Lambda $ can be covered by intervals 
$ I_ 1 ,\dotsc, I_N$, where each $ I_n$ has length at most $ t$, and is 
 centered at a  rational $ \frac aq$, where $1\leq  q \leq  C _{\Lambda }t ^{-d}$  
(and so $ N\leq   C _{\Lambda } t ^{2d}$). 
\end{definition}

Note that the condition above implies that $ \Lambda $ has upper Minkowski dimension $ 2d$.  
There  are non-trivial examples of such sets.  For integer   $ D \geq 2$, define a  
 Cantor-like set  $ \Lambda $ below. 
\begin{equation*}
\Lambda := \biggl\{  \sum_{  j \in J }  2^{-D^{j} } \;:\; J\subset \mathbb N  \biggr\}. 
\end{equation*}
We can cover this set with intervals of length $ t= 2 ^{1-D ^{n+1}}$,  
with each interval centered at a rational with denominator $ 2 ^{D ^{n}} \leq  2t ^{- 1/D}$. 
Hence, the arithmetic Minkowski 
dimension of $ \Lambda $ is at most $ 1/D$.

Under the above condition, we prove the following theorem.

\begin{theorem}\label{t:main}
There is a $ 0< d < 1$  so that for any $ \Lambda \subset [0,1]$ of arithmetic Minkowski dimension $ d$, the 
inequality below holds. 
\[ \| \Ca_{\Lambda} f \|_{\ell^2(\mathbb Z)}  \leq C _{\Lambda }  \|f\|_{\ell^2(\mathbb Z)}.\]
\end{theorem}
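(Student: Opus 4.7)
The plan is to combine Bourgain's circle-method framework for discrete polynomial multipliers with the arithmetic structure of $\Lambda$. Using the countability of $\Lambda$ I would first linearize: for any measurable $\lambda : \Z \to \Lambda$, it suffices to bound the linear operator
\[
T f(n) := \sum_{m \neq 0} f(n-m) \frac{e(\lambda(n) m^2)}{m}
\]
on $\ell^2(\Z)$. A smooth dyadic decomposition $1/m = \sum_{k \geq 0} K_k(m)$, with $K_k$ supported on $|m| \sim 2^k$ and $\|K_k\|_\infty \lesssim 2^{-k}$, reduces matters to summable bounds for the frequency-localized pieces $T_k f(n) := \sum_m f(n-m) e(\lambda(n) m^2) K_k(m)$.

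At scale $k$ the natural width for the circle method is $t_k := 2^{-2k}$: for $|m| \sim 2^k$ and $|\beta| \leq t_k$, the phase $e(\beta m^2)$ is a slowly-varying, bounded-frequency perturbation. I would apply the arithmetic Minkowski hypothesis at width $t_k$ to cover $\Lambda$ by intervals $I_{k,j}$ centered at rationals $a_{k,j}/q_{k,j}$, each with denominator bounded by a fixed negative power of $t_k$. After disjointifying the cover, the level sets $E_{k,j} := \{n : \lambda(n) \in I_{k,j}\}$ partition $\Z$, so
\[
\|T_k f\|_{\ell^2}^2 \leq \sum_j \Bigl\| \sup_{\lambda \in I_{k,j} \cap \Lambda} |T_k^\lambda f| \Bigr\|_{\ell^2}^2,
\]
where $T_k^\lambda$ denotes the constant-$\lambda$ version. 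For $\lambda = a_{k,j}/q_{k,j} + \beta \in I_{k,j}$, the factorization $e(\lambda m^2) = e(a_{k,j} m^2 / q_{k,j}) \cdot e(\beta m^2)$ separates the arithmetic and smooth parts. The arithmetic factor, being $q_{k,j}$-periodic in $m$, admits a finite Fourier expansion whose coefficients are classical quadratic Gauss sums of size at most $q_{k,j}^{1/2}$, producing an overall gain of $q_{k,j}^{-1/2}$. After a corresponding additive modulation of $f$, each resulting summand is a discrete convolution against $e(\beta m^2) K_k(m)$; the supremum over $\beta$ of these smooth pieces is controlled on $\ell^2$ via discrete-to-continuous transference combined with Stein's $L^2(\RR)$ theorem for $\mathcal{C}_2$.

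Assembling the pieces, the total bound takes the form $\sum_k 2^{c k d} \cdot (\text{off-scale decay in } k)$ for some $c > 0$ depending on the Minkowski counts and Gauss-sum gains. The main obstacle is establishing the off-scale decay, as the bare $\ell^2$ bound on each $T_k^\lambda$ is only $O(1)$: one must extract additional cancellation either via a $T T^\ast$/Cotlar--Stein argument exploiting the oscillation of $e(\lambda m^2)$ across different $m$, or by further subdividing each $I_{k,j}$ into Weyl major/minor arcs and invoking the classical Weyl inequality on the minor arcs. A secondary difficulty is handling the supremum over $\lambda \in I_{k,j}$ rather than a single $\lambda$; I would address this through a Rademacher--Menshov or numerical-differentiation argument applied to the smooth one-parameter family $\beta \mapsto T_k^{a_{k,j}/q_{k,j} + \beta} f$, whose frequency content is constrained to scale $t_k$. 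Once both the off-scale decay and the internal maximal inequality are in place, choosing $d$ small enough to absorb the $2^{c k d}$ loss completes the proof.
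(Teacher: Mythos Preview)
Your outline correctly identifies the circle-method architecture and the role of the arithmetic Minkowski hypothesis, but the gap you yourself flag as ``the main obstacle'' is real and is not closed by the fixes you propose. The scale-by-scale organization gives, for each $k$ and each covering interval $I_{k,j}$, only the bare bound $\|T_k^\lambda\|_{\ell^2\to\ell^2}\lesssim 1$. Your Gauss-sum factorization of $e(a_{k,j}m^2/q_{k,j})$ produces $q_{k,j}$ modulated pieces with coefficients of size $q_{k,j}^{-1/2}$; if you estimate these by the triangle inequality and invoke Stein's theorem on each, you \emph{lose} $q_{k,j}^{1/2}$ rather than gain, and in any case $q_{k,j}$ may equal $1$. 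The $TT^\ast$/Cotlar route is explicitly ruled out in the paper's introduction: it would require a Weyl-type estimate uniform in $\lambda$ that is not available here. So neither proposed mechanism supplies the off-scale decay needed to sum in $k$.

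The paper resolves this with two ingredients you are missing. First, the minor arcs are not handled scale-by-scale but via a Sobolev/numerical-differentiation lemma (Lemma~\ref{SOB}) applied to the error multipliers $E_j$: one has $\|E_j\|_\infty\lesssim 2^{-\delta j}$ from Weyl, $\|\partial_\lambda E_j\|_\infty\lesssim 2^{2j}$ trivially, and the Minkowski dimension of $\Lambda$ interpolates these to give a summable bound on the maximal operator. Second, and more substantially, the major arcs are organized by denominator level $s$ rather than by spatial scale $k$, and the supremum over $\lambda$ near a fixed rational $A/Q$ together with the sum over all $B/Q$ is controlled by a new oscillatory extension of Bourgain's multi-frequency maximal inequality (Theorem~\ref{KEY}): for $N$ separated frequencies $\theta_n$ one has $\bigl\|\sup_\lambda\bigl|\sum_n \Mod_{\theta_n}T_\lambda(\varphi*\Mod_{-\theta_n}f)\bigr|\bigr\|_2\lesssim(\log N)^2\|f\|_2$. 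This is what replaces the triangle inequality over the $q$ modulations and turns the Gauss-sum factor $2^{-s/2}$ into genuine decay in $s$. The \emph{arithmetic} part of the Minkowski hypothesis is then used once more, to cover $\Lambda$ at width $10^{-s}$ by intervals centered at rationals, so that Theorem~\ref{KEY} applies on each with a controlled $A/Q$. Without this multi-frequency inequality your argument cannot close.
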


Concerning the proof, in continuous variants of these questions, the method of $ T  T ^{\ast} $ has been dominant, in the case of a fixed polynomial \cite{MR822187}, and in the maximal variants \cites{MR1364908,MR1719802}.  But this technique  in the discrete analogs \cites{MR922412,MR1019960,MR1719802}  would require sophisticated  version of Weyl's Lemma, to address the `minor arcs'.  But, this  Lemma is not available, so alternate approach is required.  

Following Bourgain's lead, we treat the Theorem above as a maximal multiplier theorem, where the multipliers are a function of $ \lambda $, given by 
\begin{equation}\label{e:Mlb}
M(\lambda,\beta) =  \sum_ {m \neq 0} \frac{e(\lambda m^2 -\beta m)}{m} . 
\end{equation}
A detailed description of these multipliers is available from the Hardy-Littlewood circle method, an analysis 
that is carried out in \S \ref{s:circle}. 
As is typical, the analysis splits into the `major and minor arcs.'   

The minor arcs component has little structure, except that the multiplier is `small.'   (Weyl's Lemma is relevant here.)
In the $ \ell ^2 $-case of Bourgain's ergodic theorems, this information and a trivial 
square function argument dispenses with the minor arcs. 
This is not the case in the present study, as the minor arcs arise for each choice of $ \lambda  $.  It is 
hardly clear how they are related as $ \lambda $ varies, except through a trivial derivative estimate. 
Addressing this is the first crucial way that the dimensional hypothesis is used, in combination with 
an elementary device described in Lemma~\ref{SOB}.

That leaves the major arcs, centered at rationals with small denominator. 
Around them, as is usual, the discrete sum $ M (\lambda , \beta )$  is approximated by a multiplier that 
looks like a scaled variant of the integral version, namely Stein's operator in \eqref{e:stein}.  But, the  multiplier  has several 
distinguished frequency points, as described in \S\ref{s:max}.  This is an additional arithmetic 
feature identified by Bourgain. 
Establishing the oscillatory variant of Bourgain's estimate is  one of the innovations of this paper.  
However, as the rational approximate to $ \lambda $ changes, so do the multipliers, forcing another 
application of  the powerful inequality of Theorem~\ref{KEY}.  
This can however only be done in a controlled number of times. 
The \emph{arithmetic} part of definition of Minkowski dimension is then decisive.  

We believe  Pierce's conjecture is   true, but have very limited techniques 
at our disposal to prove it.  On the other hand, there are many potential variants of the approach 
established in this paper, and it would be interesting to explore some of these.  
In a different direction,  the pivotal function $ M (\lambda , \beta )$ is the Green's function for a periodic Schr\"odinger equation. 
It has several beautiful properties, including functional relations,   \cite {MR1650976,MR3241850}.  We would not be surprised 
to learn that these properties are relevant to Pierce's conjecture.

\subsection{Acknowledgments}
This project began at the AIM workshop ``Carleson theorems and multilinear operators" held in May 2015. The authors are grateful to all the organizers of the conference, and to Lillian Pierce in particular, who brought the conjecture above to their attention. Additional thanks are due to Michael Christ, Kevin Henriot, Izabella Laba, Xiaochun Li, Victor Lie, Camil Muscalu, and Jill Pipher for many helpful discussions on the matter, and to Terence Tao for his continued support.

\section{Preliminaries}


\subsection{Notation}
As previously mentioned, we let $e(t):= e^{2\pi i t}$. Throughout, $c$ will denote a small number whose precise value may differ from line to line, and $0 < \epsilon \ll 1$ will denote a sufficiently small positive quantity.

We will also fix throughout a smooth dyadic resolution of the function $\frac{1}{t}$. Thus, 
\begin{equation}\label{e:1/t}
 1/t = \sum_{k \in \Z} \psi_k(t) := \sum_{k \in \Z} 2^{-k} \psi(2^{-k} t), \qquad t\neq 0, 
\end{equation}
where $ \psi $ is a smooth  odd function satisfying $ \lvert  \psi (x) \rvert  \leq \mathbf 1_{ [-1/4 , 1]} (\lvert  x\rvert )$. 
We will be concerned with the regime $|t| \geq 1$, we will restrict our attention to
\[ \sum_{k \geq 0} \psi_k(t),\]
which agrees with $\frac{1}{t}$ for $|t| \geq 1$. 

Finally, since we will be concerned with establishing a priori $\ell^2(\Z)$- or $L^2(\RR)$-estimates in this paper, we will restrict every function considered to be a member of a ``nice'' dense subclass: each function on the integers will be assumed to have \emph{finite support}, and each function on the line will be assumed to be a \emph{compactly supported, indefinitely differentiable} function.

We will make use of the modified Vinogradov notation. We use $X \lesssim Y$, or $Y \gtrsim X$ to denote the estimate $X \leq CY$ for an absolute constant $C$. We use $X \approx Y$ as
shorthand for $Y \lesssim X \lesssim Y $.
We also make use of big-O notation: we let $O(Y)$ denote a quantity that is $\lesssim Y$.

\subsection{Transference}
The Fourier transform on $  f \in L^2(\mathbb R )$ is  defined by 
\begin{equation*}
\mathcal F _{\mathbb R } f (\xi ) =  \int_{\RR} f (x) e (- \xi x)\; dx . 
\end{equation*}
This is a unitary map on $ L ^2 (\mathbb R )$, with inverse
\begin{equation*}
\mathcal F _{\mathbb R }^{-1} g (x ) =  \int_{\RR} g (\xi) e ( \xi x)\; dx . 
\end{equation*}

{Recall that the fundamental domain for the torus is  $[-1,1] \subset \RR$.} 
The Fourier transform on $ f \in \ell ^2 (\mathbb Z )$ is defined by 
\begin{equation*}
\mathcal F _{\mathbb Z } f (\beta ) = \sum_{n\in \Z} f (n) e (- \beta n).  
\end{equation*}
This is a unitary map from $ \ell ^2 (\mathbb Z )$ to $ L ^{2} ( \mathbb T )$, with inverse Fourier transform
\begin{equation*}
\mathcal F^{-1} _{\mathbb Z } g (n ) = \int_{\TT} g(\beta) e ( \beta n) \ d\beta.  
\end{equation*} 

The main analytic tool of our paper (\S 3 below) is a maximal inequality over convolutions, which is seen as a maximal multiplier theorem:
\begin{equation} \label{e:ZZ}
\Bigl\lVert \sup _{\lambda \in \Lambda } \bigl\lvert  \mathcal F _{\mathbb Z } ^{-1} (L (\lambda , \beta ) \mathcal F _{\mathbb Z } f) \bigr\rvert \Bigr\rVert _{\ell ^2 (\mathbb Z )} \lesssim \lVert f\rVert _{\ell ^2 (\mathbb Z )}.  
\end{equation}
Here, $\{ L(\lambda,\beta) : \lambda \in \Lambda \}$ are compactly supported inside $\TT$.

As such, we can view these multipliers as living in a common fundamental domain for the torus, $A \subset [-1,1] \subset \RR$, $A \cong \RR/\Z$, inside the real line. The following transference lemma due to Bourgain endorses this change of perspective.

\begin{lemma} [Lemma 4.4 of \cite{MR1019960}]
\label{l:transference}
Suppose $\{ m(\lambda, - ) : \lambda \in \Lambda \}$ are (a countable family of) uniformly bounded multipliers on $A \subset [-1,1]$, a fundamental domain for $\TT \cong \RR/ \Z $.
If there is a constant $ C $ such that 
\[ 
\Bigl\lVert \sup _{\lambda \in \Lambda } \bigl\lvert  \mathcal F _{\mathbb R } ^{-1} (m(\lambda , \xi ) \mathcal F _{\mathbb R } f) \bigr\rvert \Bigr\rVert _{L ^2 (\mathbb R )} \leq C  \lVert f\rVert _{L ^2 (\mathbb R )},
\]
then, there holds on $ \ell ^2 (\mathbb Z )$, 
\[ 
\Bigl\lVert \sup _{\lambda \in \Lambda } \bigl\lvert  \mathcal F _{\mathbb Z } ^{-1} (m (\lambda , \beta ) \mathcal F _{\mathbb Z } f) \bigr\rvert \Bigr\rVert _{\ell ^2 (\mathbb Z )} \lesssim  C \lVert f\rVert _{\ell ^2 (\mathbb Z )}. 
\]
\end{lemma}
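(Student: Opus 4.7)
The plan is a standard sampling-style transference. Given $f \in \ell^2(\Z)$, I will lift it to an $F \in L^2(\RR)$ by placing $\mathcal F_\Z f$ inside the single fundamental domain: I set $\mathcal F_\RR F$ equal to $\mathcal F_\Z f$ on $A$ (via the identification $A \cong \TT$) and zero off $A$. Since $|A|=1$, two applications of Plancherel give $\|F\|_{L^2(\RR)} = \|f\|_{\ell^2(\Z)}$, and the $1$-periodicity of the integrand together with Fourier inversion on $\TT$ shows $F(n) = f(n)$ for every integer $n$. Treating the given multipliers as functions on $\RR$ supported in $A$, the same $1$-periodicity argument gives, for each $\lambda$,
\[
T_\lambda^{\RR} F(n) := \mathcal F_\RR^{-1}\bigl( m(\lambda,\cdot)\, \mathcal F_\RR F \bigr)(n) = T_\lambda^{\Z} f(n)
\]
for every $n \in \Z$. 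Hence $\sup_\lambda |T_\lambda^\Z f(n)| = \sup_\lambda |T_\lambda^\RR F(n)|$ at every integer, and the lemma will follow once I establish the sampling inequality
\[
\sum_{n \in \Z} \sup_{\lambda \in \Lambda} |T_\lambda^\RR F(n)|^2 \lesssim \int_\RR \sup_{\lambda \in \Lambda} |T_\lambda^\RR F(x)|^2\, dx,
\]
because the hypothesis then bounds the right-hand side by $C^2 \|F\|_{L^2(\RR)}^2 = C^2 \|f\|_{\ell^2(\Z)}^2$.

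The crucial observation is that for every $\lambda \in \Lambda$ the Fourier transform of $T_\lambda^\RR F$ is supported in the \emph{same} compact set $A \subset [-1,1]$. So I fix once and for all a Schwartz function $\phi$ with $\mathcal F_\RR \phi \equiv 1$ on $[-1,1]$; the reproducing identity $T_\lambda^\RR F = T_\lambda^\RR F * \phi$ then holds uniformly in $\lambda$. A weighted Cauchy--Schwarz applied to $T_\lambda^\RR F(n) = \int T_\lambda^\RR F(x)\,\phi(n-x)\,dx$ with weight $|\phi(n-x)|$ yields the pointwise estimate
\[
|T_\lambda^\RR F(n)|^2 \leq \|\phi\|_{L^1(\RR)} \int_\RR |T_\lambda^\RR F(x)|^2\, |\phi(n-x)|\, dx.
\]
Because $\Lambda$ is countable, the supremum in $\lambda$ is measurable and may be pulled inside the integral on the right; summing in $n$ and using the Schwartz estimate $\sup_x \sum_n |\phi(n-x)| \lesssim 1$ then delivers the sampling inequality.

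The step I expect to be most delicate is precisely this last one, and it is where the hypothesis on the common support $A$ is decisive. Without a uniform band-limit for the entire family $\{T_\lambda^\RR F\}_\lambda$, I could not choose a single Schwartz reproducing function $\phi$ independent of $\lambda$, and the pointwise reproducing estimate could not survive taking the supremum. Shared band-limit, plus countability of $\Lambda$ (already recorded after the statement of Conjecture~\ref{j:}), is exactly what makes the transference from $L^2(\RR)$ to $\ell^2(\Z)$ work for the \emph{maximal} operator rather than just a single multiplier.
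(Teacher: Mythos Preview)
Your argument is correct and is the standard sampling transference. The paper itself does not give a proof of this lemma; it is stated with a citation to Bourgain's Lemma~4.4 and used as a black box. Your approach---lifting $f$ to an $F$ with $\widehat F$ supported in $A$, identifying the discrete and continuous multiplier outputs at integer points, and then using a single band-limited reproducing kernel $\phi$ (independent of $\lambda$) together with weighted Cauchy--Schwarz to pass from $\ell^2(\Z)$ to $L^2(\RR)$---is exactly how such transference lemmas are typically proved, and your emphasis on the uniform Fourier support being what allows $\phi$ to be chosen independently of $\lambda$ is the right diagnosis of the key point.
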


We will henceforth bound our maximal operators on $ \mathbb R $, finding that the dilation and translation structure 
useful.   And, so we write $ \mathcal F = \mathcal F _{\mathbb R }$ below, 
and sometimes write $ \widehat f = \mathcal F f  $.

\subsection{A Technical Estimate}
This Lemma exhibits one way in which the small Minkowski dimension is used.  
It concerns a maximal multiplier estimate, in which one has a trade off between a relatively small 
$ L ^{\infty }$ estimate on the the multipliers, and a relatively large derivative estimate.

\begin{lemma}\label{SOB}
Suppose that $\{ m(\lambda,-) : \lambda \in [0,1]\}$ is a family of multipliers, that is differentiable in $\lambda$ with
\begin{equation}\label{e:SOB}
\lVert  m(\lambda, \beta ) \rVert_ \infty  \leq a, 
\qquad \|\partial_\lambda m(\lambda, \beta )\|_ \infty   \leq A.
\end{equation}
If  $ \Lambda $ is of  Minkowski dimension $d >0 $, then
\begin{equation}\label{e:dimLam}
\Bigl\lVert 
\sup _{\lambda \in \Lambda } \bigl\lvert  \mathcal F ^{-1}  ( m (\lambda , \beta ) \widehat f (\beta ) )\bigr\rvert
\Bigr\rVert_2 \lesssim C _{\Lambda } ^{1/2} 
(a + a^{ 1 - \frac d2}A^{\frac d2}  ) \lVert f\rVert_2 
\end{equation}
Above $ C _{\Lambda } = \sup _{0 < \delta  <1} \delta ^{-d} N (\delta )$, where 
$ N (\delta )$ is the smallest number of intervals of length $ \delta $ needed to cover $ \Lambda $. 
\end{lemma}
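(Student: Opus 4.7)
The plan is to exploit the trade-off between the $L^\infty$ bound $a$ and the derivative bound $A$ via a one-dimensional Sobolev argument in the parameter $\lambda$, combined with a $\delta$-net covering of $\Lambda$ supplied by the Minkowski dimension. Fix a scale $\delta \in (0,1]$ to be optimized, and cover $\Lambda$ by intervals $I_1,\dotsc,I_N$ of length $\delta$, where $N \leq C_\Lambda \delta^{-d}$. Write $T_\lambda f := \mathcal F^{-1}(m(\lambda,\cdot)\widehat f)$; by Plancherel and the hypothesis \eqref{e:SOB}, $\|T_\lambda f\|_2 \leq a\|f\|_2$ and $\|\partial_\lambda T_\lambda f\|_2 \leq A\|f\|_2$ for every $\lambda$.

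The key pointwise device is the elementary Sobolev-type inequality: for any $C^1$ function $g$ on an interval $I$ of length $\delta$,
\begin{equation*}
\sup_{\lambda \in I}|g(\lambda)|^2 \;\leq\; \delta^{-1}\int_I |g(s)|^2\,ds \;+\; 2\int_I |g(s)|\,|g'(s)|\,ds,
\end{equation*}
obtained by writing $|g(\lambda)|^2 = |g(\lambda_0)|^2 + \int_{\lambda_0}^\lambda 2\operatorname{Re}(g\bar g')\,ds$ and averaging over $\lambda_0\in I$. Apply this with $g(\lambda)=T_\lambda f(x)$ for each fixed $x$ and each interval $I_j$, integrate in $x$, invoke Fubini and Cauchy–Schwarz, and use the two $L^2$ bounds to obtain
\begin{equation*}
\bigl\lVert \sup_{\lambda\in I_j} |T_\lambda f|\bigr\rVert_2^{2} \;\leq\; (a^2 + 2\delta\, a A)\,\lVert f\rVert_2^{2}.
\end{equation*}

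Summing over $j$ through the crude bound $\sup_{\Lambda}|T_\lambda f|^2 \leq \sum_j \sup_{I_j}|T_\lambda f|^2$ and using $N \leq C_\Lambda \delta^{-d}$ yields
\begin{equation*}
\bigl\lVert \sup_{\lambda\in\Lambda}|T_\lambda f|\bigr\rVert_2^{2} \;\leq\; C_\Lambda\,\delta^{-d}(a^2 + 2\delta\, a A)\,\lVert f\rVert_2^{2}.
\end{equation*}
It remains to optimize: if $a\leq A$, set $\delta = a/A \in (0,1]$ to extract the contribution $C_\Lambda^{1/2} a^{1-d/2}A^{d/2}\lVert f\rVert_2$; if $a \geq A$, simply take $\delta = 1$, where the covering has $N \leq C_\Lambda$ intervals and the right side is $\lesssim C_\Lambda a^2\lVert f\rVert_2^2$. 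Adding the two cases gives the stated estimate \eqref{e:dimLam}.

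No step here is substantial: once the Sobolev inequality and the dimension hypothesis are married, the computation is mechanical. The only mild point of care is checking that the minimizer $\delta = a/A$ actually lies in $(0,1]$, which is why the $a$-term must be kept as an alternative in the final bound.
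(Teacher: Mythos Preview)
Your proof is correct and follows essentially the same strategy as the paper: cover $\Lambda$ by $N\leq C_\Lambda\delta^{-d}$ intervals of length $\delta$, control the supremum on each interval by combining the size and derivative bounds, sum, and optimize at $\delta=a/A$. The only cosmetic difference is that on each interval the paper uses the cruder fundamental-theorem-of-calculus bound $|F_\lambda f|\le |F_{\lambda_0}f|+\int_{\lambda_-}^{\lambda_+}|\partial_\mu F_\mu f|\,d\mu$ in place of your Sobolev-type inequality; after optimization the two routes give the same exponent.
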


We will apply this Lemma in a setting where $   2 ^{- \epsilon j} \simeq a < A < a ^{- \tau }$, where $ 0< \epsilon < 1$  
and $ \tau \approx 2/ \epsilon >1$ are  fixed, while   $ j\in \mathbb N $ is arbitrary. 
Note that if we take $ d= 1/ \tau \approx \epsilon /2$, we would then have 
\begin{equation}\label{e:tau}
\Bigl\lVert 
\sup _{\lambda \in \Lambda } \bigl\lvert  \mathcal F ^{-1}  ( m (\lambda , \beta ) \widehat f (\beta ) )\bigr\rvert
\Bigr\rVert_2 \lesssim C _{\Lambda }  a ^{\frac14 - \frac{\epsilon}{4}} \lVert f\rVert_2 
\end{equation}
This will be a summable estimate in  $ j\in \mathbb N $, since $ a \simeq 2 ^{- \epsilon j}$.

The Lemma only requires the assumption of upper Minkowski dimension $ d$ on $ \Lambda $,  the 
arithmetic condition being used later.  
Variants of this Lemma  have been observed before, but as we do not know a easy reference, we include a proof.

\begin{proof}  
We will not need the `arithmetic' part of Definition~\ref{d:amk} in this proof, 
and for the purposes of this argument, write 
\begin{equation*}
F _{\lambda } f :=  \mathcal F ^{-1}  ( m (\lambda , \beta ) \widehat f (\beta ) )
\end{equation*}
For a choice of $ 0 < \delta < 1$ to be chosen, 
let $ \Lambda _ \delta  $ be the centers of a covering of $ \Lambda $ by intervals of length $ \delta $, 
so that the cardinality of $ \Lambda _{\delta }$ is at most $ C _{\Lambda } \delta ^{-d}$. 
For a fixed $ \lambda _0 \in \Lambda _{\delta }$, let $ \lambda _{\pm} = \lambda _{\delta } \pm \delta/2 $. 
Observe that for  $ \lambda _{-} \leq \lambda \leq \lambda _{+}$, there holds 
\begin{align*}
\lvert  F _{\lambda } f \rvert& \leq  \lvert  F _{\lambda_0 } f  \rvert   
+
 \int _{\lambda_- } ^{ \lambda _+}    \left| \partial _{ \mu  } F _{\mu  } f  \right| \; d \mu 
\end{align*}

We compute $ L ^2 $ norm of the supremum over $ \lambda \in \Lambda $.  For the first term on the right above, 
dominating a supremum by $ \ell ^2 $-sum we have 
\begin{align}  \label{e:D1}
\sum_{\lambda _0 \in \Lambda _{\delta }} \lVert  F _{\lambda_0 } f  \rVert_2 ^2 
\leq C _{\Lambda }\delta ^{-d} a  ^2  \lVert f\rVert_2 ^2 . 
\end{align}
Here, one should note that we have used the Minkowski dimension to bound the cardinality of $ \Lambda  _{\delta }$ 
by $ C _{\Lambda} \delta ^{-d}$.    For the second term, we use the estimate \eqref{e:SOB} and Cauchy-Schwarz 
to bound 
\begin{align}
\sum_{\lambda _0 \in \Lambda _{\delta }}  
\Bigl\lVert  \int _{\lambda _- } ^{\lambda _+}  \lvert   \partial _{ \mu  } F _{\mu  } f   \rvert \; d \mu \Bigr\rVert_2 ^2 
& \leq \delta \sum_{\lambda _0 \in \Lambda _{\delta }}
 \int _{\lambda _- } ^{\lambda _+}  \lVert   \partial _{ \mu  } F _{\mu  } f   \rVert_2 ^2  \; d \mu 
 \\ \label{e:D2}
 & \leq  C _{\Lambda } \delta ^{2-d} A ^2  \lVert  f\rVert_2 ^2 .    
\end{align}
One should note that we have gained a power of $ \delta ^2 $ above, before applying the Minkowski dimension assumption. 

The Lemma is proved by choosing $ \delta >0$ so that the bounds in \eqref{e:D1} and \eqref{e:D2} are equal. 
In the less interesting case where $ 0 < A \leq a$, we take $ \delta =1$, to complete the case. 
Otherwise, we take $ \delta = a/A$ to complete the proof.  
\end{proof}

\subsection{A Second Technical Estimate}

If $ \sigma $ is a Schwartz function on $ \mathbb R $ which vanishes at the origin and at infinity, then as is well known, 
\begin{equation*}
\bigl\lVert \sup _{\lambda >0}  \lvert  \mathcal F ^{-1} ( \sigma (\xi  \lambda ) \widehat f (\sigma ) )\rvert\, \bigr\rVert_2 
\lesssim \lVert f\rVert_2. 
\end{equation*}
We need a certain quantification of this fact.

We suppose that $ \{\sigma (\lambda,  \cdot  ) \;:\; 0< \lambda < \infty \}$ are  functions on $ \mathbb R $ that satisfy 
this estimate for $  \lambda \in (0, \infty )$.   Below  $ 0 < c < C < \infty $ 
are fixed constants.  For a fixed $ \rho  \geq 1$
\begin{equation}\label{e:sig1}
\lvert\sigma (\lambda, \xi )\rvert  
\lesssim 
\begin{cases}
\frac { \lvert  \xi \rvert } {\rho ^2 \sqrt \lambda  }      &  \lvert  \xi \rvert    < c   \rho  \sqrt \lambda  
\\
1/ \rho    &  c  \rho  \sqrt \lambda   < \lvert  \xi \rvert   < C \rho  \sqrt \lambda  
\\
\frac {\sqrt \lambda } {   \lvert  \xi  \rvert } &  C   \rho  \sqrt \lambda     <\lvert  \xi \rvert  
\end{cases}.  
\end{equation}
Set $ F _{\lambda } f (x) =  \mathcal F ^{-1} ( \sigma ( \lambda, \xi  ) \widehat f (\xi  )) (x) $. 

\begin{lemma}\label{l:sig}  Let $ \rho \geq 1$.  Assume that $   \{\sigma (\lambda,  \cdot  ) \;:\; 0< \lambda < \infty \}$ satisfy \eqref{e:sig1}. 
Assume also that $ \sigma (\lambda , \cdot )  $ is $ C ^{1}$ on each interval $ \{ (2 ^{k}, 2 ^{k+1} )\;:\; k\in \mathbb Z \}$, 
with moreover    $  \{ (\lambda /\rho^2 ) \partial _{\lambda }  \sigma (\lambda,  \cdot  )\}$ satisfying the inequalities \eqref{e:sig1}.  
Then, there holds 
\begin{equation}\label{e:sig2}
\bigl\lVert \sup _{\lambda >0}   \lvert  F _{\lambda } f\rvert \,  \bigr\rVert_2 
\lesssim \lVert f\rVert_2. 
\end{equation}
\end{lemma}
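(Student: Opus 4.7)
The approach is the standard Sobolev / square-function device for one-parameter maximal multipliers: bound the supremum pointwise by a product of two $L^{2}(d\lambda/\lambda)$ quantities, one involving $\sigma(\lambda,\xi)$ and the other involving $\lambda\partial_{\lambda}\sigma(\lambda,\xi)$, with Plancherel reducing everything to two multiplier-side estimates that turn out to be balanced in $\rho$.

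Since $\sigma(\lambda,\cdot)$ is only piecewise $C^{1}$ in $\lambda$ on the dyadic intervals $(2^{k},2^{k+1})$, and since \eqref{e:sig1} ensures $F_{\lambda}f\to 0$ as $\lambda\to 0$ and $\lambda\to\infty$, the fundamental theorem of calculus applied on each interval and the supremum over $k$ yields the pointwise bound
\begin{equation*}
\sup_{\lambda>0}|F_{\lambda}f(x)|^{2}
\ \leq\ \sup_{k\in\mathbb{Z}}|F_{2^{k}+}f(x)|^{2}
\ +\ 2\int_{0}^{\infty}|F_{\lambda}f(x)|\,|\partial_{\lambda}F_{\lambda}f(x)|\,d\lambda.
\end{equation*}
For the integral term, rewriting $d\lambda=\lambda\cdot d\lambda/\lambda$ and applying Cauchy--Schwarz first in $\lambda$ and then in $x$, followed by Plancherel, reduces the estimate to the two uniform bounds
\begin{equation*}
\sup_{\xi}\int_{0}^{\infty}|\sigma(\lambda,\xi)|^{2}\,\frac{d\lambda}{\lambda}\ \lesssim\ \frac{1}{\rho^{2}},
\qquad
\sup_{\xi}\int_{0}^{\infty}|\lambda\partial_{\lambda}\sigma(\lambda,\xi)|^{2}\,\frac{d\lambda}{\lambda}\ \lesssim\ \rho^{2}.
\end{equation*}
The second follows immediately from the first applied to $(\lambda/\rho^{2})\partial_{\lambda}\sigma$ via the hypothesis, and the product of the two gives an $O(1)$ control on the integral contribution, independent of $\rho$.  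For the boundary term I would crudely dominate $\sup_{k}|F_{2^{k}+}f|^{2}\leq\sum_{k}|F_{2^{k}+}f|^{2}$ and use Plancherel, reducing to $\sup_{\xi}\sum_{k}|\sigma(2^{k}+,\xi)|^{2}\lesssim 1/\rho^{2}$, a discrete analogue of the first bound above.

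Both key estimates are verified by fixing $\xi$ and splitting the range of $\lambda$ (respectively, the $k$-sum) at the two critical scales $\lambda\asymp(|\xi|/\rho)^{2}$ coming from the trichotomy in \eqref{e:sig1}. On the outer two regions the decay $|\sigma|\lesssim\sqrt{\lambda}/|\xi|$ and $|\sigma|\lesssim|\xi|/(\rho^{2}\sqrt{\lambda})$ yields geometric convergence contributing $O(1/\rho^{2})$ each; on the middle region there are $O(1)$ scales each contributing $O(1/\rho^{2})$.  There is no real obstacle; the only item requiring minor care is the bookkeeping around the possible jumps of $\sigma$ at the dyadic scales $\lambda=2^{k}$, which is exactly what forces the splitting of the pointwise bound into a boundary piece plus an integral piece, and the verification that the Sobolev-type gain in the integral piece balances the factor $\rho^{2}$ coming from the derivative hypothesis.
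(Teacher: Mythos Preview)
Your proposal is correct.  The verification of the three multiplier-side bounds is exactly as you say: in each of the three regimes of \eqref{e:sig1} the integrand $|\sigma(\lambda,\xi)|^{2}\,d\lambda/\lambda$ contributes $O(1/\rho^{2})$, and the derivative hypothesis converts this to the $O(\rho^{2})$ bound for the second integral, so the Cauchy--Schwarz product is $O(1)$ uniformly in $\rho$.  The discrete boundary sum $\sum_{k}|\sigma(2^{k}+,\xi)|^{2}\lesssim 1/\rho^{2}$ is likewise immediate (only $O(1)$ values of $k$ land in the middle regime, and the outer regimes decay geometrically).

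The paper takes a closely related but organizationally different route.  Rather than the continuous Sobolev bound in $d\lambda/\lambda$, it introduces a discrete net $\Lambda$ consisting of a $2^{k}/\rho^{2}$-net in each dyadic interval $[2^{k},2^{k+1})$, so that $|\Lambda\cap[2^{k},2^{k+1})|\approx\rho^{2}$.  It then writes $|F_{\lambda}f|\leq|F_{\lambda_{-}}f|+\int_{\lambda_{-}}^{\lambda_{+}}|\partial_{\mu}F_{\mu}f|\,d\mu$ over a gap of length $\lesssim\lambda/\rho^{2}$, and controls both pieces by $\ell^{2}(\Lambda)$ square functions via Plancherel.  The key multiplier computation there is $\sum_{\lambda\in\Lambda}|\sigma(\lambda,\xi)|^{2}\lesssim 1$, where the factor $\rho^{2}$ appears not in the integrand but in the cardinality of the net per dyadic block; in the middle regime this is $\rho^{2}$ terms of size $1/\rho^{2}$, giving $O(1)$.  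So the two arguments differ in where the $\rho$-dependence is parked: you balance $1/\rho^{2}$ against $\rho^{2}$ in two continuous square functions, while the paper trades a single $O(1)$ square function against a net of density $\rho^{2}$ and a gap of size $\lambda/\rho^{2}$.  Your version is arguably cleaner and avoids the auxiliary net; the paper's version has the minor advantage of making the discrete $\ell^{2}$ structure explicit, which is what is quoted in the subsequent Corollary~\ref{c:sig}.
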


Above, we do not impose any condition on the derivative of $ \sigma (\lambda , \cdot )$ at the points of discontinuity 
$ \lambda = 2 ^{k}$ for $ k\in \mathbb Z $. 

\begin{proof}
Let $ \Lambda \subset (0, \infty )$ be such that each integer $ k$,  we have $ 2 ^{k} \in \Lambda $, 
the set $  \Lambda \cap [2 ^{k}, 2^{k+1})$ is a maximal $ 2 ^{k}/ \rho ^2$-net of  cardinality at most $ 2 \rho  ^2 $.  
For $  \lambda \in (0, \infty )$, let $ \lambda_\mp \in \Lambda $ be the largest/smallest element of $ \Lambda $ that 
is smaller/larger than $   \lambda $. Then, $ 0  <  \lambda_+  -  \lambda_- < 2 \lambda / \rho ^2  $.  
 For any $ \lambda $, we will have $ \lambda _-, \lambda , \lambda _+ \in [2 ^{k}, 2 ^{k+1}]$ for some integer $ k$. 
 This lets us estimate 
\begin{align}    
\lvert  F _{  \lambda } f \rvert 
&\leq \lvert   F _{\lambda _-} f \rvert + \int _{\lambda _-} ^{\lambda _+} \lvert  \partial _{\lambda } F _{\lambda } f\rvert\; d \lambda \\
&\leq \left(\sum_{\lambda_{-} \in \Lambda} |F_{\lambda_-} f|^2 \right)^{1/2}  +
\left(\sum_{\lambda_{-} \in \Lambda} \left(\int_{\lambda_-}^{\lambda_+} |\partial_\lambda F_{\lambda} f| \ d\lambda \right)^2 \right)^{1/2} \\  \label{e:sig3}
&\lesssim \left(\sum_{\lambda_{-} \in \Lambda} |F_{\lambda_-} f|^2 \right)^{1/2} + 
\left(\sum_{\lambda_{-} \in \Lambda} \frac{\lambda_-}{\rho^2} \cdot \int_{\lambda_-}^{\lambda_+} |\partial_\lambda F_{\lambda} f|^2 \ d\lambda \right)^{1/2} 
\end{align}
by an application of Cauchy-Schwarz. Note that this final expression is independent of $\lambda$, so we obtain the conclusion upon estimating the right-hand side of the foregoing in $L^2$. 

For the first term in \eqref{e:sig3}, we have 
\begin{align*}
\sum_{\lambda \in \Lambda} \lVert F_{\lambda} f\rVert_2 ^2 
& 
= \int \lvert  \widehat f (\xi ) \rvert ^2 
\sum_{\lambda \in \Lambda} \lvert  \sigma (\lambda , \xi )\rvert ^2 \; d \xi .  
\end{align*}
We estimate the $ L ^{\infty  } (d \xi )$ norm  of the sum above, by showing that for  $ k \in \mathbb Z $, 
\begin{equation}  \label{e:sig4}
 \sum_{\substack{\lambda \in \Lambda\\ 2 ^{k} \leq  \frac{ \lvert  \xi \rvert }{ \rho \sqrt {\lambda }}   < 2 ^{k+1}    }} \lvert  \sigma (\lambda , \xi )\rvert ^2
\lesssim   \min \{  2 ^{2k}, 2 ^{-2k}\}
\end{equation}
The bound above is summable  in $ k\in \mathbb Z $ to a constant, completing the bound for the first term in \eqref{e:sig3}.

We divide the proof of \eqref{e:sig4} into three cases, $ k< -c'$, $ -c'\leq k \leq c'$, and $ k >c'$, where 
$ c'>0$ is a fixed constant, depending upon the constants $ 0 < c < C < \infty $ in \eqref{e:sig1}.  
Using \eqref{e:sig1}, and $ \lvert  \Lambda \cap  [ 2 ^{j}, 2 ^{j+1})\rvert \leq \rho ^2  $ for all integers $ j$,  
we have 
\begin{equation*}
\textup{LHS \eqref{e:sig4}} \lesssim  
 \rho ^2  \sup _{ 2 ^{k} \leq  \frac{ \lvert  \xi \rvert }{ \rho \sqrt {\lambda }}   < 2 ^{k+1}} 
 \begin{cases}
\Bigl( \frac { \lvert  \xi \rvert  } {\rho ^2 \sqrt \lambda  }    \Bigr) ^2  &  k < -c'
\\
\rho ^{-2}   &  -c \leq k \leq c 
\\
 \Bigl(\frac {\sqrt \lambda } {   \lvert  \xi  \rvert }  \Bigr) ^2 & c < k
 \end{cases}.
\end{equation*}
Then, \eqref{e:sig4} follows by inspection. 

For the second term in \eqref{e:sig3}, we have 
\begin{align*}
\sum_{\lambda_{-} \in \Lambda} \frac{\lambda_-}{\rho^2} \cdot \int_{\lambda_-}^{\lambda_+} |\partial_\lambda F_{\lambda} f|^2 \ d\lambda  
=& 
\int \lvert  \widehat f (\xi )\rvert ^2 
\sum_{\lambda_- \in \Lambda} \Bigl(\frac{\lambda_-}{\rho^2} \Bigr) ^2 
\sup _{ \lambda_- \leq \lambda \leq \lambda _+} \lvert  \partial_ \lambda  \sigma (\lambda , \xi )\rvert ^2 \; d \xi .  
\end{align*}
But, then under our assumptions on $  \partial_ \lambda  \sigma (\lambda , \xi )$, the analysis of the first term in \eqref{e:sig3} applies with no changes to the argument. 
\end{proof}

\section{A Key Maximal Inequality} \label{s:max}

We present and prove a key maximal inequality. It is an extension of this inequality of Bourgain, 
the harmonic analytic core of the proof of the arithmetic ergodic theorems.  
Define $\Phi _\lambda f := \varphi_\lambda * f$, where $\varphi$  Schwartz function satisfying
\begin{equation}\label{e:1phi}
 \mathbf 1_{[-1/8,1/8]} \leq \hat{\varphi} \leq \mathbf 1_{[-1/4,1/4]}, 
\end{equation}  
and $ \varphi _{\lambda } (y) = \varphi (y/\lambda ) / \lambda $.  
The inequality $ \lVert \sup _{\lambda >0}  \lvert  \Phi _{\lambda }f\rvert \rVert_2 \lesssim \lVert f\rVert_2$ 
is a variant of the Hardy-Littlewood maximal inequality in which zero is a distinguished frequency mode. 

Bourgain's version has several distinguished frequency modes.  
Let $ \{ \theta _ n \;:\; 1\le n \le N\}$ be points in $ \mathbb R $ which are  $ \tau $-separated, in that 
$ \lvert  \theta _m - \theta _n\rvert> \tau >0 $ for $ m\neq n$.  Define a maximal operator by 
\begin{equation}\label{e:M}
Mf(x):= \sup_{ \lambda > 1/\tau } \left| \sum_{n=1}^N \textup{Mod}_{\theta_n} \Phi _\lambda(  \textup{Mod}_{-\theta_n} f)  \right|, 
\end{equation}
where $\textup{Mod}_\theta f (x) := e(\theta x) f(x)$. 
Trivially, the norm of $ M$ is dominated by $ N$.  The key observation is that that 
this trivial bound can be improved to the much smaller term $ {\log}^2 N$.

\begin{theorem}\cite[Lemma 4.13]{MR1019960}\label{t:BMax}
For all $ N \geq 2$,  $ 0 < \tau < \infty $ and   $ \tau $-separated points  $ \{ \theta _ n \;:\; 1\le n \le N\}$, there holds 
\[ \|Mf \|_{L^2(\RR)} \lesssim {\log}^2 N \|f\|_{L^2(\RR)}.\]
\end{theorem}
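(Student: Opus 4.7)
The strategy is to exploit frequency disjointness. After rescaling so $\tau = 1$, the multiplier symbol of $T_\lambda := \sum_n \textup{Mod}_{\theta_n} \Phi_\lambda \textup{Mod}_{-\theta_n}$ is
\begin{equation*}
m_\lambda(\xi) = \sum_{n=1}^N \hat\varphi(\lambda(\xi-\theta_n)),
\end{equation*}
a sum of $N$ bumps each supported in a neighborhood of radius $\le 1/(4\lambda) \le 1/4$ around a 1-separated $\theta_n$. These supports are therefore pairwise disjoint, so $\|m_\lambda\|_\infty \le 1$ and $\|T_\lambda f\|_2 \le \|f\|_2$ uniformly in $\lambda > 1$ and $N$. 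The whole content of the theorem is to recover the supremum while losing only $\log^2 N$ in $N$.

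First I would reduce the continuous supremum over $\lambda > 1$ to a dyadic one, via the Sobolev-type embedding
\begin{equation*}
\sup_{\lambda \in [2^j, 2^{j+1}]}|T_\lambda f|^2 \le 2|T_{2^j} f|^2 + 2\int_{2^j}^{2^{j+1}} |T_\lambda f|\,|\partial_\lambda T_\lambda f|\,d\lambda.
\end{equation*}
The operator $\lambda\partial_\lambda T_\lambda$ has symbol $-\sum_n \lambda(\xi - \theta_n)\hat\varphi'(\lambda(\xi-\theta_n))$, again a sum of disjoint bounded bumps, so $\|\lambda\partial_\lambda T_\lambda f\|_2 \lesssim \|f\|_2$ uniformly. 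Cauchy-Schwarz and integration in $\lambda$ then reduce matters to estimating the dyadic maximal function $\sup_{j \ge 0}|T_{2^j} f|$ in $L^2$.

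The key step is a Rademacher-Menshov argument applied to the telescoping differences $D_j f := T_{2^j} f - T_{2^{j+1}} f$. Near each $\theta_n$, the symbol of $D_j$ is supported in a frequency annulus of radii $\sim 2^{-j}$ and uniformly bounded; these annuli are disjoint across $j$ (standard Littlewood-Paley) and across distinct $\theta_n$ by the 1-separation of the modulation frequencies. Consequently
\begin{equation*}
\sum_{j \ge 0}\|D_j f\|_2^2 \lesssim \|f\|_2^2
\end{equation*}
with a constant independent of $N$. Writing $T_{2^j} f$ as a telescoping sum of $D_k$'s and applying the Menshov-Rademacher maximal inequality to this orthogonal family yields $\|\sup_{j \le J}|T_{2^j} f|\|_2 \lesssim \log J \cdot \|f\|_2$.

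The principal obstacle is the careful bookkeeping that converts this into the sharp $\log^2 N$ factor of the theorem. Two logs must appear: one from Menshov-Rademacher, and a second from the structure of the $N$ modulation points. I expect the second log to come either from an outer dyadic decomposition of $\{\theta_1,\ldots,\theta_N\}$ - grouping them into $\sim \log N$ clusters at each spatial scale and applying a second Menshov-Rademacher - or from a truncation argument showing that beyond $J \sim N$ dyadic scales the tail of $\sum_j \|D_j f\|_2^2$ is summable by a direct square-function estimate, so that $J$ can effectively be taken as large as $N$. Pinning down this combinatorial step, and verifying that the Menshov-Rademacher constant aligns with the target $\log^2 N$, is the real heart of the matter.
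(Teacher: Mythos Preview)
The paper does not prove this theorem; it is quoted from Bourgain \cite[Lemma 4.13]{MR1019960} and used as a black box throughout \S\ref{s:max}. So there is no proof in the paper to compare your proposal against, only Bourgain's original argument.

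Your reduction to dyadic $\lambda=2^{j}$ is correct (the derivative term really does contribute only $O(\|f\|_2)$, independent of $N$, once one integrates $\lambda\|\partial_\lambda T_\lambda f\|_2^2$ using the disjointness of the bumps). The square-function bound $\sum_{j\ge 0}\|D_j f\|_2^2\lesssim\|f\|_2^2$ with constant independent of $N$ is also correct. The gap is in the Rademacher--Menshov step: that inequality yields $\bigl\|\sup_{0\le j\le J}\bigl|\sum_{k<j}D_k f\bigr|\bigr\|_2\lesssim(\log J)\|f\|_2$, and here the supremum runs over \emph{all} $j\ge 0$, so $J$ is unbounded. Your fix (b) does not work: summability of the tail $\sum_{j>J}\|D_j f\|_2^2$ is automatic (the full sum is finite) but says nothing about $\sup_{j>J}|T_{2^j}f|$. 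There is no intrinsic scale in the problem beyond the $1$-separation, and nothing distinguishes $J\sim N$; if $\hat f$ is concentrated at distance $\sim 2^{-M}$ from the $\theta_n$ with $M$ arbitrary, the active scales are $j\sim M$, not $j\sim N$. Fix (a) is too vague to evaluate; the $\theta_n$ are arbitrary $1$-separated points with no further hierarchical structure to exploit.

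Bourgain's actual argument is not a Rademacher--Menshov argument in the scale parameter. After localizing $f$ to the $N$ frequency pieces $f_n$ near each $\theta_n$ (the trivial bound $\sum_n M_{\textup{HL}}|f_n|$ already gives $N^{1/2}$), he analyzes, at each point $x$, the distribution of the real numbers $\{\Phi_\lambda(\textup{Mod}_{-\theta_n}f_n)(x)\}_n$ and exploits cancellation among the phases $e(\theta_n x)$ via a level-set decomposition together with a key $L^2$ estimate on linear combinations of characters; the two logarithms come from this combinatorics, not from counting scales. A later route due to Nazarov--Oberlin--Thiele replaces this with an $r$-variational inequality for the single-frequency averages $\Phi_\lambda$, which is strictly stronger than your square-function estimate and is precisely what lets one handle infinitely many scales---but that variational input is a substantial theorem in its own right, not a consequence of orthogonality.
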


We prove an extension of this inequality, with the averages replaced by 
oscillatory singular integrals.  
Define 
\begin{equation} \label{e:Tlambda}
T_\lambda f(x) :=   \sum_{k \geq k_0} \int e(\lambda t^2) \psi_k(t) f(x-t) \; dt. 
\end{equation}
Above, $2 ^{ k_0} \geq 1/ \tau $, and the dependence on $ k_0$ is suppressed.  
Stein's inequality  \cite{MR1364908} implies  that $ f\mapsto  \sup_{0 < \lambda \leq  \tau^2 } |T_\lambda f|$ is a 
 bounded operator on $L^2(\RR)$.    For $ \{\theta _n \;:\; 1\leq n \leq N\}$ that are  $ \tau $-separated as in Theorem~\ref{t:BMax}, 
 define 
\begin{equation}\label{e:T}
 Tf(x) := \sup_{0< \lambda \leq  \tau ^{2} } \left| \sum_{n=1}^N \textup{Mod}_{\theta_n} T_\lambda \left( \varphi _{\tau } * \textup{Mod}_{-\theta_n} f \right) \right|.
\end{equation}
This definition matches that of \eqref{e:M}, except that there is an additional convolution with 
$  \varphi _{\tau } $ as in \eqref{e:1phi}.  
Note that the supremum is over all $ 0< \lambda < \tau ^2  $.  
The main result of this section is as follows.  

\begin{theorem}\label{KEY}
For all  $ 0< \tau < 2 ^{-k_0} < \infty $,     $ N \geq 2$, and  $ \tau $-separated points  $ \{ \theta _ n \;:\; 1\le n \le N\}$, 
this inequality holds.  
\[ \| T f\|_{L^2(\RR)} \lesssim {\log}^2 N \| f\|_{L^2(\RR)}.\]
\end{theorem}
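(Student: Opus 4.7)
The strategy is to reduce Theorem~\ref{KEY} to Bourgain's Theorem~\ref{t:BMax} via a stationary-phase analysis of the multiplier of $T_\lambda$, while controlling the supremum over $\lambda$ by Lemma~\ref{l:sig}.

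I would first analyze the multiplier
\[
m_\lambda(\xi) \;=\; \sum_{k \geq k_0} \int e(\lambda t^2 - \xi t)\,\psi_k(t)\,dt.
\]
Completing the square $\lambda t^2 - \xi t = \lambda(t - \xi/(2\lambda))^2 - \xi^2/(4\lambda)$ and applying stationary phase around the critical point $t_\ast = \xi/(2\lambda)$ yield
\[
m_\lambda(\xi) \;\approx\; e(-\xi^2/(4\lambda))\,\sigma_\lambda(\xi),
\]
where $\sigma_\lambda$ is a smooth amplitude essentially supported on $|\xi| \in [\lambda/\tau,\,1/(4\tau)]$ with pointwise size $\lesssim \sqrt\lambda/|\xi|$. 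Dyadically decomposing the $k$-sum localizes $\sigma_\lambda$ to annuli $|\xi| \approx 2^j\lambda$ for $k_0 \leq j \lesssim \log_2(1/(\tau\lambda))$; on each annulus, $\sigma_\lambda^j$ is, up to constants and smooth errors, a shifted bump $\hat{\varphi}((\xi - 2^j\lambda)/(c\,2^j\lambda))$ scaled by the small factor $1/(2^j\sqrt\lambda)$.

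Next, to reduce each dyadic slice to Bourgain's setup, I would factor the chirp $e(-(\xi - \theta_n)^2/(4\lambda)) = e(-\xi^2/(4\lambda))\cdot e(\theta_n\xi/(2\lambda))\cdot e(-\theta_n^2/(4\lambda))$. The factor $e(-\xi^2/(4\lambda))$ is common to every $n$ and acts as a unitary Fresnel transform on the ambient $L^2$ space; $e(-\theta_n^2/(4\lambda))$ is a unit-modulus scalar; and $e(\theta_n\xi/(2\lambda))$ corresponds to a spatial translation by $\theta_n/(2\lambda)$. After these manipulations, the $j$-slice of $f \mapsto \sum_n \textup{Mod}_{\theta_n} T_\lambda(\varphi_\tau \ast \textup{Mod}_{-\theta_n} f)$ is, up to $L^2$-isometries, comparable to the scaled Bourgain operator
\[
\tfrac{1}{2^j\sqrt\lambda}\sum_n \textup{Mod}_{\theta_n + 2^j\lambda}\,\Phi_{1/(c\,2^j\lambda)}\,\textup{Mod}_{-(\theta_n + 2^j\lambda)}.
\]
Since shifting $\theta_n \mapsto \theta_n + 2^j\lambda$ preserves the $\tau$-separation, Theorem~\ref{t:BMax} delivers the $\log^2 N$ bound for each fixed $j$, uniformly in $\lambda$. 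The supremum over $\lambda \in (0,\tau^2]$ and the sum over $j$ are then absorbed by Lemma~\ref{l:sig}, applied dyadically with amplitude $a \approx 1/(2^j\sqrt\lambda)$ and parameter $\rho \approx 2^j\sqrt\lambda$; the geometric decay of the amplitude in $j$ together with the $O(\log(1/(\tau\lambda)))$ admissible range of scales keeps the total contribution within the target $(\log N)^2$ factor.

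The main obstacle is the quadratic residue of the chirp inside each dyadic bump, namely the factor $e(-\eta^2/(4\lambda))$ for $\eta$ varying over a support of width $\approx 2^j\lambda$: since $4^j\lambda$ can well exceed $1$, this residue is not perturbative and cannot be treated as an error. It must instead be absorbed into the ambient Fresnel unitary together with the spatial translation, ensuring that the resulting operator genuinely fits Bourgain's multi-frequency framework and in particular that the effective $\tau$-separation survives. The hypothesis $\lambda \leq \tau^2$ is decisive here, controlling the interaction between the Fresnel transform, the translations $\theta_n/(2\lambda)$, and the low-pass cut-off at scale $1/\tau$, so that the reduction closes without a logarithmic loss in $\tau$.
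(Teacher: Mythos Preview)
Your reduction has a genuine gap: the Fresnel factor $e(-\xi^2/(4\lambda))$ that you pull out is a $\lambda$-dependent unitary \emph{convolution} operator on the physical side, not a pointwise operation. Consequently, for the family $U_\lambda$ of Fresnel multipliers and any family $G_\lambda$, the pointwise quantity $\sup_\lambda |U_\lambda G_\lambda(x)|$ is \emph{not} controlled by $\sup_\lambda |G_\lambda(x)|$; the two only agree in $L^2$ for each fixed $\lambda$. Since the supremum in $T$ is over $\lambda$, you cannot ``absorb'' $e(-\xi^2/(4\lambda))$ before applying Bourgain's Theorem~\ref{t:BMax}. The same objection applies to the factor $e(\theta_n\xi/(2\lambda))$: on the physical side this is a spatial translation by $\theta_n/(2\lambda)$, which depends on both $n$ and $\lambda$, so after these moves the operator no longer has the form $\sum_n \textup{Mod}_{\theta_n'} \Phi_\mu \textup{Mod}_{-\theta_n'}$ with \emph{fixed} frequencies $\theta_n'$ and only the scale $\mu$ varying. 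Your final paragraph seems to sense this, but ``absorbing into the ambient Fresnel unitary'' is exactly the step that fails.

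The paper proceeds quite differently. It never isolates the chirp $e(-\xi^2/(4\lambda))$. Instead it decomposes $T_\lambda = \sum_l T_\lambda^l$ according to the size of $\lambda 2^{2k} \approx 2^l$, and treats four ranges of $l$. For $l < -10\log_2 N$ the oscillation is negligible and one reduces to a maximal truncated Hilbert transform, handled by Bourgain's inequality (Lemma~\ref{l:H}). For $|l| \le 10\log_2 N$ one subtracts the zero Fourier mode $\mu(k,\lambda,l)$, so that $T_\lambda^l = \Phi_{k,\lambda,l} + \mu(k,\lambda,l)\,\varphi_{k_l}\ast\cdot$; the second piece is fed directly into Theorem~\ref{t:BMax} with the small coefficient $\min(2^l,2^{-l})$, while the first piece has multiplier $\widehat{\phi_{k,\lambda,l}}$ satisfying the two-sided bounds of Lemma~\ref{EST1}, so Corollary~\ref{c:sig} gives an $O(1)$ bound \emph{without} ever unwinding the quadratic phase. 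For $l > 10\log_2 N$ one simply cites the Stein--Wainger decay $2^{-l/4}$. The chirp is thus never factored out; its effect is captured entirely through the size and derivative bounds on $\widehat{\phi_{k,\lambda,l}}$.
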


We will apply this with $ \tau \approx 2 ^{-s}$, and $ \tau $-separated 
points in $ [0,1]$,  so that $ N \leq 2 / \tau $.    
An additional gain of $ 2 ^{- \epsilon s}$ will come from a well known estimate  on complete Gauss sums.

\subsection{Proof Overview}
A potential path to a proof is through Bourgain's innovative proof of Theorem~\ref{t:BMax}. 
The latter depends upon subtle properties of averages, such as the quadratic variational result. 
Such estimates for oscillatory integral don't seem to hold, however. 
Thus, our path is to invoke Bourgain's inequality, and control residual terms by square function arguments.

Using the dilation structure of $\RR$, it suffices to consider the case of $ \tau =1$ in Theorem~\ref{KEY}.   
 (We omit the straight forward proof of this reduction.)  
The operator $ T _{\lambda }$ in \eqref{e:Tlambda} is decomposed as follows. In the integral 
\begin{equation*}
\int e(\lambda t^2) \psi_k(t) f(x-t) \; dt, 
\end{equation*}
the variable $ t$ is approximately $ 2 ^{k}$ in magnitude.  And, we will decompose the operator and maximal function so that 
$ \lambda t ^2 \approx \lambda 2^{2k}$ is approximately constant.  

To this end,write 
\begin{equation}\label{e:Tkl}
\begin{split}
 T_\lambda f(x)=&\sum_{l \in \Z}  \sum_{ k \geq k_0}  \int e(\lambda t^2) \psi_k(t) f(x-t) \; dt \cdot    \mathbf 1_{[1,2)}(\lambda 2 ^{2k-l}) \\
& \qquad =: \sum_{l \in \Z} \int e(\lambda t^2) \psi_{k,\lambda,l}(t) f(x-t) \; dt  =: \sum_{l \in \Z} T_\lambda^l f(x). 
\end{split}
\end{equation}
Here $ \psi_{k, \lambda,l} (t) =  \psi_k (t)  $ 
and $ k = k (\lambda ,l)$ is the unique choice of $ k$ so that $1 \leq \lambda 2 ^{2k-l} <2$.

The supremum over $ \lambda $ is then divided into four separate cases, according to the relative size of 
$ \ell $ and $ N$.  
\begin{align}
Tf &\leq  \sup_{0\leq \lambda \leq 1} \left| \sum_{n=1}^N \textup{Mod}_{\theta_n}  \sum_{l < -10 \log_2 N} T_\lambda^l  ( \varphi * \textup{Mod}_{-\theta_n}f)\right| 
\label{e:1}\\
& \qquad + \sum_{l = -10 \log_2 N}^ 0 \sup_{0\leq \lambda \leq 1} 
\left| \sum_{n=1}^N \textup{Mod}_{\theta_n} T_\lambda^l ( \varphi * \textup{Mod}_{-\theta_n}f)\right|  
\label{e:2}\\
& \qquad \qquad + \sum_{l = 1}^ {10 \log_2 N}  \sup_{0\leq \lambda \leq 1} 
\left| \sum_{n=1}^N \textup{Mod}_{\theta_n} T_\lambda^l ( \varphi * \textup{Mod}_{-\theta_n}f)\right| 
\label{e:3}\\
& \qquad \qquad \qquad+ \sum_{ l > 10 \log_2 N} \sum_{n=1}^N \sup_{0\leq \lambda \leq 1} \left| T_\lambda^l (\varphi * \textup{Mod}_{-\theta_n}f) \right|. 
\label{e:4}
\end{align}
The first term is a relatively easy instance of Bourgain's inequality Theorem~\ref{t:BMax}. 
The next two terms are also a consequence of Bourgain's inequality, with an additional argument. 
The last term is of a simpler nature.  

\subsection{Case One: $l < -10 \log_2 N$} {}

The term to bound is \eqref{e:1}. The essence of this case is that the parameter $ \lambda $ is 
so small that the oscillatory term is essentially constant, and thus, the role of $ \lambda $ is only  
to introduce a truncation on a Hilbert transform.    Let us formulate this last point as a 
Lemma.  Define 
\begin{equation}\label{e:Hk}
H^ j f (x) :=\sum_{  k = j} ^{\infty }  \int  \psi_k(t) f(x-t) \; dt. 
\end{equation}
We will use subscripts to `single scale' operations, and superscripts to denote sums over the same.

\begin{lemma}\label{l:H} 
For all $ N \geq 2$, and 1-separated points  $ \{ \theta _ n \;:\; 1\le n \le N\}$, there holds 
\begin{equation}\label{e:H}
\biggl\lVert  \sup _{j \geq 0}  \biggl\lvert  \sum_{n=1} ^{N} 
\textup{Mod} _{\theta _n}  H^j \varphi \ast (\textup{Mod} _{- \theta _n} f )
\biggr\rvert \biggr\rVert_2 \lesssim \log ^2 N \cdot \lVert f\rVert_2. 
\end{equation}
\end{lemma}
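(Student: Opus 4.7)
My plan is to identify the operator as a Fourier multiplier, split its symbol into a main part and an error, bound the error with a square function (no $\log N$ loss), and reduce the main part to Bourgain's maximal inequality Theorem~\ref{t:BMax}. In detail: the operator is a Fourier multiplier with symbol
\begin{equation*}
M_j(\xi) = \sum_{n=1}^N m(2^j(\xi-\theta_n))\,\hat\varphi(\xi-\theta_n),
\end{equation*}
where $m(\eta) := \sum_{k\geq 0}\hat\psi(2^k\eta)$ is the Fourier transform of the unit-scale truncated Hilbert kernel $h_0 := \sum_{k\geq 0}\psi_k$. The function $m$ is bounded, odd, smooth away from the origin, with one-sided limits $m(0^\pm) = \mp i\pi$ (since $\sum_{k\in\Z}\hat\psi(2^k\eta) = -i\pi\sgn(\eta)$ and the $k < 0$ tail contributes a smooth function vanishing at the origin), and enjoys Schwartz decay at infinity. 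The $1$-separation of the $\theta_n$ and support of $\hat\varphi$ in $[-1/4,1/4]$ ensure that the summands of $M_j$ occupy pairwise disjoint frequency bands. I will decompose $m(\eta) = -i\pi\sgn(\eta)\zeta(\eta) + r(\eta)$, where $\zeta$ is a fixed smooth even bump with $\zeta \equiv 1$ on $[-1/4,1/4]$ and supported in $[-1/2,1/2]$. Then $r$ is continuous (the jumps of $m$ and $-i\pi\sgn\zeta$ cancel), vanishes at $0$, is Lipschitz near the origin, and inherits Schwartz decay at infinity, giving $|r(\eta)| \lesssim \min(|\eta|,|\eta|^{-1})$. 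This splits $M_j = M_j^{\mathrm{main}} + M_j^{\mathrm{err}}$.

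The error term will be handled by a square-function argument. The pointwise estimate $\sum_{j \geq 0}|r(2^j\eta)|^2 \lesssim 1$ (split the sum at $j\approx -\log_2|\eta|$), combined with Plancherel and the disjoint supports of $\{\hat\varphi(\cdot-\theta_n)\}_n$, gives
\begin{equation*}
\sum_{j \geq 0}\bigl\lVert \mathcal F^{-1}(M_j^{\mathrm{err}}\hat f)\bigr\rVert_2^2 \lesssim \lVert f\rVert_2^2.
\end{equation*}
Dominating the supremum by the $\ell^2$-square function yields $\bigl\lVert \sup_j\lvert\mathcal F^{-1}(M_j^{\mathrm{err}}\hat f)\rvert \bigr\rVert_2 \lesssim \lVert f\rVert_2$, with no $\log N$ loss.

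For the main term, the scale separation $\supp(\zeta(2^j\cdot)) \subset \{|\eta|\leq 2^{-j-1}\} \subset \{|\eta| \leq 1/16\}$ for $j \geq 3$ places it within the flat region of $\hat\varphi$, so that $M_j^{\mathrm{main}}(\xi) = -i\pi\sum_n \sgn(\xi-\theta_n)\zeta(2^j(\xi-\theta_n))$ for $j \geq 3$; the three cases $j = 0,1,2$ give single-scale bounded operators handled directly. The resulting multiplier is a Bourgain-type maximal symbol with the ``bump'' $\sigma(\eta) := \sgn(\eta)\zeta(\eta)$ replacing $\hat\varphi$. Applying (a mild extension of) Theorem~\ref{t:BMax} with this bump gives the $\log^2 N\,\lVert f\rVert_2$ bound on the main term. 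The principal technical obstacle will be justifying this extension: either one inspects Bourgain's $TT^*$-argument to confirm that it applies uniformly for bounded, compactly supported, \emph{discontinuous} bumps $\sigma$ (with the constant depending only on $\lVert \sigma\rVert_\infty$ and $\supp\sigma$), or one smooths $\sigma$ via the Littlewood-Paley representation $\sgn(\eta) = \tfrac{i}{\pi}\sum_{l\in\Z}\hat\psi(2^l\eta)$ and sums the individual Bourgain bounds using a norm of $\hat\psi(2^l\cdot)\zeta$ that is summable in $l$.
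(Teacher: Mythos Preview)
Your decomposition is essentially the same as the paper's: write the truncated Hilbert multiplier as $m(\eta)=-i\pi\,\sgn(\eta)\,\hat\varphi(\eta)+r(\eta)$ (the paper uses $\hat\varphi$ itself in the role of your $\zeta$), square-function the remainder $r$ using $|r(\eta)|\lesssim\min\{|\eta|,|\eta|^{-1}\}$ and the disjoint frequency supports, and feed the main term to Bourgain. The only substantive difference is in the main term. The paper observes that since $\sgn(\xi-\theta_n)$ is \emph{independent of $j$}, one can absorb it into the input: set
\[
g:=\sum_{n=1}^N \textup{Mod}_{\theta_n} H\bigl(\varphi*\textup{Mod}_{-\theta_n}f\bigr),
\]
so that $\lVert g\rVert_2\le\lVert f\rVert_2$, and the main term becomes precisely Bourgain's maximal operator \eqref{e:M} applied to $g$ with the \emph{smooth} bump $\hat\varphi$. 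Theorem~\ref{t:BMax} then applies as stated, with no extension needed. This entirely dissolves what you identify as the ``principal technical obstacle.'' Your Option~1 (reopening Bourgain's $TT^\ast$ argument for discontinuous bumps) would in fact go through, but is unnecessary; your Option~2 (Littlewood--Paley smoothing of $\sgn$) is more delicate than you suggest, since $\hat\psi(2^l\cdot)\zeta$ does not obviously carry a norm summable in $l$ that Bourgain's bound respects. The paper's device of pulling the Hilbert transform outside the supremum is the cleaner route.
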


\begin{proof}
This is a consequence of Bourgain's inequality.  Define 
\begin{equation*}
g =   \sum_{n=1} ^{N} 
\textup{Mod} _{\theta _n}  H( \varphi \ast \textup{Mod} _{- \theta _n} f )
\end{equation*}
where $ H$ is the usual (un-truncated) Hilbert transform. Then, $ \lVert g\rVert_2\leq \lVert f\rVert_2$, 
and there holds 
\begin{equation*}
\lVert M g \rVert_2 \lesssim \log ^2 N \lVert f\rVert_2
\end{equation*}
where $ M $ is maximal operator in Theorem~\ref{t:BMax}.  
Now, recall that $ \Phi _ \lambda f = \varphi _{\lambda } \ast f$ is an average on scale $ \lambda $, with $ \varphi $ 
as in \eqref{e:1phi}.  Then, we can compare the maximal function $ Mg$ to the one in \eqref{e:H} by a square 
function in $ j\geq 0$.   

The point is that averages of Hilbert transforms and truncations of Hilbert transforms are 
essentially the same object.   
In the case of $ N=1$,  for any function $ \phi \in L ^2 (\mathbb R )$, we have 
by Plancherel, 
\begin{align*}
\lVert    H^j \phi - \Phi _{2 ^{j}} H \phi  \rVert_2 ^2 
&= 
\int \lvert \widehat \phi  (\xi )\rvert ^2 
 \bigl\lvert  \Psi (\xi 2 ^{-j}) -  \widehat \varphi (\xi 2 ^{-j}) \textup{sgn} (\xi ) \bigr\rvert ^2 \; d \xi 
\end{align*}
Here, $ \Psi $ is given by 
\begin{equation*}
  \Psi (\xi ) = \sum_{j\geq 1} \widehat \psi (2 ^{j}\xi ),  
\end{equation*}
which satisfies the estimates  below, uniformly in $ \xi \in \mathbb R $. 
\begin{equation*}
\lvert  \Psi (\xi )- \widehat \varphi (\xi ) \textup{sgn} (\xi ) \rvert 
\lesssim \min \{ \lvert  \xi \rvert, \lvert  \xi \rvert ^{-1}   \}. 
\end{equation*}
It follows that 
\begin{equation*}
\sum_{j\in \mathbb Z } \lVert    H^j \phi - \Phi _{2 ^{j}} H \phi  \rVert_2 ^2  \lesssim \lVert \phi \rVert_2 ^2 .  
\end{equation*}

Therefore, when we have the 1-separated frequencies $ \{\theta _n\}$, and convolution with respect to $ \varphi $, 
to localize the frequency around each $ \theta _n$,  it follows that 
\begin{align*}
\sum_{j\geq 0} 
\biggl\lVert   \sum_{n=1} ^{N} 
\textup{Mod} _{\theta _n} \{  H^j( \varphi \ast \textup{Mod} _{- \theta _n} f ) -  \Phi _{ 2 ^{j}} \textup{Mod} _{- \theta _n} g\} 
\biggr\rVert_2 ^2 \lesssim \lVert f\rVert_2 ^2 . 
\end{align*}
This proves inequality \eqref{e:H}.

\end{proof}

We are to bound the term in \eqref{e:1},  and to conform to notation set therein, let 
\begin{equation*}
\tilde H^ j f (x) :=\sum_{  k=k_0} ^{  j }  \int  \psi_k(t) f(x-t) \; dt. 
\end{equation*}
which is the complementary sum to the one in \eqref{e:Hk}.  As an immediate corollary to \eqref{e:H}, 
there holds 
\begin{equation}\label{e:tildeH}
\biggl\lVert  \sup _{j \geq 0}  \biggl\lvert  \sum_{n=1} ^{N} 
\textup{Mod} _{\theta _n}  \tilde H^j \varphi \ast (\textup{Mod} _{- \theta _n} f )
\biggr\rvert \biggr\rVert_2 \lesssim \log ^2 N \cdot \lVert f\rVert_2. 
\end{equation}
Extend the notation above to 
\[ \aligned
R_\lambda f(x) &:= \sum_{ k \geq k_0  \;:\;  2^{2k} \leq \frac{2}{N^{10} \lambda} } \int  \psi_k(t) f(x-t) \; dt. \endaligned \]
Then, 
\begin{align}
\eqref{e:1} \leq& 
\sup_{0\leq\lambda \leq 1}  \left| \sum_{n=1}^N \textup{Mod}_{\theta_n} R_\lambda ( \varphi * \textup{Mod}_{-\theta_n}f)\right|
\\  \label{e:g1}
&\quad + \sum_{n=1} ^{N}  
\sup_{0\leq \lambda \leq 1}   
\biggl\lvert  \biggl\{ R _{\lambda } - 
 \sum_{l < -10 \log_2 N} T_\lambda^l \biggr\}  ( \varphi * \textup{Mod}_{-\theta_n}f)
\biggr\rvert . 
\end{align}
The first term on the right  is an instance of \eqref{e:tildeH}.  Thus, it remains to bound the second term above.

But,  the convolutions kernel for the difference of the two operators  in \eqref{e:g1} 
is easily controlled, uniformly in $ \theta _n$.    
It is 
\begin{align*}
\sum_{ k \geq k_0 : 2^{2k} \leq \frac{2}{N^{10} \lambda} }  
\lvert   e ( \lambda t ^2  ) -1 \rvert  \cdot  \lvert  \psi _k (t)\rvert 
& \lesssim   
\sum_{ k \geq k_0 : 2^{2k} \leq \frac{2}{N^{10} \lambda} }       \lambda t ^2   \cdot \lvert  \psi _k (t)\rvert  
\\
& \lesssim \lambda 2 ^{2k_1} 
\sum_{ k \geq k_0 : 2^{2k} \leq \frac{2}{N^{10} \lambda} }       (t/2 ^{k_1})^2   \cdot \lvert  \psi _k (t)\rvert  
\\
& \lesssim N ^{-10}  2 ^{-k_1}  \mathbf 1_{ [ -2 ^{k_1 +2}, 2 ^{k_1+2} ]} (y)
\end{align*}
where $ k_1$ is the largest integer $ k_1 \geq k_0$ such that $ 2^{2k} \leq \frac{2}{N^{10} \lambda}$.  
That is, the difference  of operators,  for a fixed $ \theta _n$  is bounded by 
\begin{align*}
\sup_{0\leq \lambda \leq 1}   
\biggl\lvert  \biggl\{ R _{\lambda } - 
 \sum_{l < -10 \log_2 N} T_\lambda^l \biggr\}  ( \varphi * \textup{Mod}_{-\theta_n}f)
\biggr\rvert \lesssim N ^{-10} M _{\textup{HL}} \lvert  f\rvert 
\end{align*}
 where $M_{\textup{HL}}$ denotes the Hardy-Littlewood maximal operator.
 This estimate is trivially summed over $ 1\leq n \leq N$, completing the bound for \eqref{e:g1}.

\subsection{Cases Two and Three: $|l| \leq 10 \log_2 N$} {}

The terms to control are \eqref{e:2} and \eqref{e:3}, in which the sum over $ l$ is limited to  $|l| \leq 10 \log_2 N$. 
We prove the bound below, in which $ l$ is held constant.  

\begin{lemma}
For any $|l| \leq 10 \log_2 N$,  
\begin{equation}\label{e:case23}
\begin{split}
\biggl\| \sup_{0\leq\lambda \leq 1}  
\biggl| \sum_{n=1}^N& \textup{Mod}_{\theta_n} T_\lambda^l ( \varphi * \textup{Mod}_{-\theta_n}f)\biggr| \biggr\|_{L^2(\RR)}
\\ &\lesssim \left( 1 + \min\{2^l, 2^{-l} \} \cdot {\log}^2 N \right) \|f\|_{L^2(\RR)}. 
\end{split}
\end{equation}
\end{lemma}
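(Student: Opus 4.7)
The plan is to decompose $T_\lambda^l$ into an un-oscillated ``Hilbert-truncation'' piece handled by Littlewood--Paley orthogonality, and an oscillatory remainder handled by Bourgain's maximal inequality (Theorem~\ref{t:BMax}). Concretely, I would write
\[
T_\lambda^l = H_{k(\lambda,l)} + E_\lambda^l, \qquad H_k f(x) := \int \psi_k(t)\, f(x-t)\, dt,
\]
so that $E_\lambda^l f(x) = \int (e(\lambda t^2)-1)\psi_k(t)\,f(x-t)\,dt$. On $\supp\psi_k$ we have $|t|\sim 2^k$ and $\lambda t^2 \in [2^l,2^{l+1})$; hence for $l\le 0$ the elementary estimate $|e(\lambda t^2)-1|\lesssim \lambda t^2 \lesssim 2^l$ forces $\|\widehat{E_\lambda^l}\|_\infty \lesssim 2^l$, while for $l>0$ a van der Corput estimate on $\widehat{T_\lambda^l}$ gives $\|\widehat{T_\lambda^l}\|_\infty \lesssim 2^{-l/2}$, in which case I would take $H_k\equiv 0$ and treat $T_\lambda^l$ itself as the remainder.

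I would first handle the main Hilbert piece. Since $H_k$ is independent of $\lambda$ on each dyadic interval $I_k=\{\lambda:k(\lambda,l)=k\}$, the supremum over $\lambda$ collapses to a supremum over $k$. Dominating $\sup_k$ by an $\ell^2$-sum, invoking the disjoint Fourier supports of $\{\varphi*\textup{Mod}_{-\theta_n}f\}_n$ (a consequence of $\hat\varphi$ being supported in $[-1/4,1/4]$ and $\{\theta_n\}$ being $1$-separated), Plancherel, and the standard Littlewood--Paley bound $\sum_k |\hat\psi(\xi 2^k)|^2 \lesssim 1$, I arrive at
\[
\Bigl\|\sup_k\Bigl|\sum_n \textup{Mod}_{\theta_n} H_k(\varphi*\textup{Mod}_{-\theta_n}f)\Bigr|\Bigr\|_2^2 \le \sum_n \int\Bigl(\sum_k |\hat\psi(\xi 2^k)|^2\Bigr)\bigl|\widehat{\varphi*\textup{Mod}_{-\theta_n}f}(\xi)\bigr|^2 d\xi \lesssim \|f\|_2^2.
\]
This yields the ``$1$'' in the target without any $\log N$ loss; the same LP calculation applied to $T_\lambda^l$ itself in the $l>0$ regime produces a bound $\lesssim 2^{-l/2}\|f\|_2 \le \|f\|_2$, which is already within the target.

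Next, for the oscillatory remainder (only needed when $l\le 0$), I would normalize $E_\lambda^l = 2^l\,\widetilde E_\lambda^l$ with $\|\widetilde E_\lambda^l\|_{\mathrm{op}}\lesssim 1$ and aim to show
\[
\Bigl\|\sup_\lambda\Bigl|\sum_n \textup{Mod}_{\theta_n} \widetilde E_\lambda^l(\varphi*\textup{Mod}_{-\theta_n}f)\Bigr|\Bigr\|_2 \lesssim \log^2 N\cdot\|f\|_2,
\]
which upon multiplying by $2^l$ delivers the $2^l\log^2 N\|f\|_2$ contribution. The idea is to mimic the proof of Lemma~\ref{l:H}: compare $\widetilde E_\lambda^l$ to a scaled average $\Phi_{2^k}$ applied to an auxiliary un-truncated version of the oscillatory singular integral, invoke Bourgain's Theorem~\ref{t:BMax} on the resulting modulated maximal averages to pick up the $\log^2 N$ factor, and absorb the residue via a square-function estimate over scales $k$. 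The continuous $\lambda$-variation inside each $I_k$ will be tamed by discretizing to $O(1)$ representatives, justified by the derivative bound $\|\partial_\lambda \widehat{\widetilde E_\lambda^l}\|_\infty\lesssim 2^{2k}$ combined with $|I_k|=2^{l-2k}$.

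The hardest step will be this adaptation of Bourgain's maximal inequality from averages $\Phi_\lambda$ to the oscillatory normalized remainders $\widetilde E_\lambda^l$. The essential technical fact, mirroring what powers Lemma~\ref{l:H}, is that $\widetilde E_\lambda^l$ enjoys a Littlewood--Paley-type orthogonality $\sum_k |\widehat{\widetilde E_\lambda^l}(\xi)|^2\lesssim 1$, and this is what allows one to transfer Bourgain's $\log^2 N$ bound through a square-function comparison.
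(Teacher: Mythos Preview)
Your decomposition is different from the paper's, and the difference is not cosmetic: the paper subtracts the \emph{zero Fourier mode} $\mu(k,\lambda,l)\,\varphi_{k_l}$ (with the shifted scale $k_l=k-l$ when $l>0$), whereas you subtract the single-scale Hilbert piece $H_k=\psi_k\ast\cdot$ for $l\le 0$ and nothing for $l>0$. The paper's subtracted term is exactly an average, so Bourgain's Theorem~\ref{t:BMax} applies directly and yields the $\min\{2^l,2^{-l}\}\log^2 N$ contribution; the remaining piece $\Phi_{k,\lambda,l}$ has vanishing zero mode and obeys the precise decay of Lemmas~\ref{EST1}--\ref{EST2}, which feed Corollary~\ref{c:sig} (the $N$-frequency version of Lemma~\ref{l:sig}) to produce the constant ``$1$''.

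The genuine gap in your proposal is the $l>0$ regime. Two problems arise. First, the uniform van der Corput bound $\|\widehat{T_\lambda^l}\|_\infty\lesssim 2^{-l/2}$ is not enough to run the ``same LP calculation'': that argument needs $\sum_k|\widehat{T_{\lambda_k}^l}(\xi)|^2$ bounded uniformly in $\xi$, and for small $|\xi|$ the multipliers $\widehat{T_\lambda^l}(\xi)$ do not decay in $k$ unless you exploit vanishing at $\xi=0$ (which is why the paper subtracts the zero mode and proves the first line of \eqref{e:EST2}). Second, and more seriously, $T_\lambda^l$ genuinely depends on $\lambda$ inside each $I_k$, and for $l>0$ the variation $|I_k|\cdot\|\partial_\lambda\widehat{T_\lambda^l}\|_\infty\approx 2^{l-2k}\cdot 2^{2k}=2^l$ is large, so your ``$O(1)$ representatives per $I_k$'' discretization fails outright. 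The paper handles this via the parameter $\rho=2^{l/2}$ in Lemma~\ref{l:sig}, which amounts to $\sim 2^l$ sample points per dyadic $\lambda$-interval, and the refined multiplier estimates \eqref{e:EST2} (with the crucial scale shift $k_l=k-l$) are exactly what make that square function converge.

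For $l\le 0$ your Hilbert-piece argument is correct and gives the ``$1$''. But your plan for the normalized remainder $\widetilde E_\lambda^l$ is too vague: the comparison ``$\widetilde E_\lambda^l\approx \Phi_{2^k}\circ(\text{un-truncated oscillatory SIO})$'' has no obvious candidate for a $\lambda$-independent un-truncated operator, and the ``essential technical fact'' $\sum_k|\widehat{\widetilde E_\lambda^l}(\xi)|^2\lesssim 1$ is not verified (and fails at $\xi=0$ unless the zero mode vanishes). The paper avoids this altogether: its zero-mode subtraction makes the remainder amenable to Corollary~\ref{c:sig} with $\rho=1$, and the subtracted part is literally a Bourgain average.
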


Notice that the sum over $ \lvert  l\rvert \leq 10 \log_2 N $ of the constants on the right lead to a bound of the form 
$ C\log ^2 N$.  

Bourgain's inequality, Theorem~\ref{t:BMax}, remains crucial, followed by an application of Lemma~\ref{l:sig}.
Much of the argument is common to both cases of $ l \leq 0$ and $ l > 0$. 
First, some notation: Recalling \eqref{e:Tkl}, with $ 1\leq \lambda 2 ^{2k- l } <2$, define 
\[ \mu(k,\lambda, l) :=  \int e(\lambda t^2) \psi_{ k  }(t)  \; dt .\]
This is the zero Fourier mode of the multipliers in question.  
Observe that 
\begin{equation}\label{e:mu<}
 |\mu(k,\lambda,l) | \lesssim \min\{ 2^{l}, 2^{- l} \}.
\end{equation}
\begin{proof} 
By the definition of $ \mathbf 1_{[1,2)} $, we restrict attention to $ \lambda \approx 2 ^{-2k+l}$. 
For $ l >0$,  on the support of $ \psi _k$, the derivative of $ \lambda t ^2 $ is at least $ c \lambda \lvert  t\rvert \gtrsim 2 ^{l} $. 
Thus, a single integration by parts proves the estimate. 

For $ l \leq 0$, since $ \psi $ is odd, clearly $ \mu (k,0, l) =0$.  Now, the claim follows from the bound 
\begin{equation*}
\Bigl\lvert \partial_ \lambda  \int e (\lambda t ^2 ) \psi _k (t)\; dt  \Bigr\rvert \lesssim 2 ^{2k},   
\end{equation*}
subject to the side condition $ 0< \lambda \lesssim  2 ^{-2k+l}$.  
Now, the partial in $ \lambda $ brings down an $ i t ^2 $ from the exponential, which immediately gives the bound above. 
\end{proof}

The purpose of this next definition  is to permit us to apply Bourgain's inequality. 
With $\varphi$ a Schwartz function as in \eqref{e:1phi}, 
define, 
\begin{equation}\label{e:phikl}
\phi_{k,\lambda,l}(t) := e(\lambda t^2) \psi_{k, \lambda , l} (t)    - \mu(k,\lambda,l) \varphi_{k_l}(t).  
\end{equation}
It is essential to note that we are subtracting off the zero frequency mode above. 
Also,   there is an adjustment in scales that is made for technical reasons:  
\begin{equation}\label{e:kl}   
k _{l} := 
\begin{cases}
k  & l \leq 0 \\ k-l & l >0 
\end{cases}. 
\end{equation}

Expand $  T_\lambda^l $ into 
\[ T_\lambda^l f(x) = \Phi_{k,\lambda,l} f(x) + \mu(k,\lambda,l) \varphi_{k_l}*f(x),\]
where $ \Phi_{k,\lambda,l} f(x) := \phi_{\lambda,l}*f(x)$.  
The left side in \eqref{e:case23} is no more than 
\begin{align*}
\sup_{0\leq\lambda \leq 1}  
\Bigl\lvert 
\sum_{n=1}^N &\textup{Mod}_{\theta_n} T_\lambda^l ( \varphi * \textup{Mod}_{-\theta_n}f) 
\Bigr\rvert
\\
& \lesssim \sup_{0\leq\lambda \leq 1}  \Bigl\lvert  \sum_{n=1}^N \textup{Mod}_{\theta_n} \Phi_{k,\lambda,l} ( \varphi* \textup{Mod}_{-\theta_n}f) \Bigr\rvert  + \min\{ 2^l, 2^{-l}\}  Mf ,  
\end{align*}
where $M$  the maximal function of Bourgain \eqref{e:M}, and we have used the estimate \eqref{e:mu<}.  
In view of Theorem~\ref{t:BMax},   this decomposition reduces the proof of \eqref{e:case23} to 
an appropriate bound for $ \sup_{0\leq\lambda \leq 1}  \lvert  \Phi _{k,\lambda, l  }  ^{\boldsymbol \theta }f\rvert $, where 
\begin{equation}\label{e:Phi_lambda}
 \Phi_{k,\lambda ,l}  ^{\boldsymbol \theta } f:=
\sum_{n=1}^N \textup{Mod}_{\theta_n} \Phi_{k,\lambda,l} ( \varphi* \textup{Mod}_{-\theta_n}f) .
\end{equation}
Namely, it remains to show that the $ L ^2 $ norm of the supremum over $ \lambda $ of the expression above 
is bounded by an absolute constant.  

The main tool is the technical square function inequality of  Lemma~\ref{l:sig}. To be explicit, let us state this corollary to the proof of that Lemma.  

\begin{cor}\label{c:sig} Under the assumptions of Lemma~\ref{l:sig}, there holds for all integers $ N$, and 
  $ 1$-separated points $ \{\theta _n \;:\; 1\leq n \leq N\}$ 
\begin{equation*}
\Bigl\lVert 
\sup _{0< \lambda < 1} 
\Bigl\lvert \sum_{n=1}^N \textup{Mod}_{\theta_n}  \mathcal F ^{-1} ( \sigma (\lambda , \xi )  
\widehat \varphi (\xi ) 
\widehat  f (\xi - \theta _n))  \Bigr\rvert\, 
\Bigr\rVert_2 \lesssim \lVert f\rVert_2. 
\end{equation*}
\end{cor}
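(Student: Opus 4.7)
The proof will be essentially a re-run of the argument for Lemma~\ref{l:sig}, with one additional observation that handles the sum over the $1$-separated frequencies $\{\theta_n\}$. Write
\begin{equation*}
F_\lambda ^{\boldsymbol\theta} f := \sum_{n=1}^N \textup{Mod}_{\theta_n}  \mathcal F ^{-1} ( \sigma (\lambda , \xi )
\widehat \varphi (\xi )
\widehat  f (\xi - \theta _n)) = \mathcal F ^{-1}(M(\lambda,\xi) \widehat f(\xi)),
\end{equation*}
where $M(\lambda,\xi) = \sum_{n=1}^N \sigma(\lambda, \xi - \theta_n)\widehat\varphi(\xi - \theta_n)$. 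The crucial structural fact is that since $\widehat\varphi$ is supported in $[-1/4, 1/4]$ and the $\theta_n$ are $1$-separated, the translates $\{\widehat\varphi(\,\cdot\, - \theta_n)\}$ have pairwise disjoint supports, and hence
\begin{equation*}
|M(\lambda,\xi)|^2 = \sum_{n=1}^N |\sigma(\lambda, \xi - \theta_n)|^2 |\widehat\varphi(\xi - \theta_n)|^2.
\end{equation*}

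The plan is to repeat verbatim the reduction in \eqref{e:sig3}, applied to $F_\lambda^{\boldsymbol\theta} f$ in place of $F_\lambda f$, producing the pointwise bound
\begin{equation*}
\sup_{0 < \lambda < 1} |F_\lambda^{\boldsymbol\theta} f| \lesssim \Bigl(\sum_{\lambda_- \in \Lambda} |F_{\lambda_-}^{\boldsymbol\theta} f|^2\Bigr)^{1/2} + \Bigl(\sum_{\lambda_- \in \Lambda} \frac{\lambda_-}{\rho^2} \int_{\lambda_-}^{\lambda_+} |\partial_\lambda F_\lambda^{\boldsymbol\theta} f|^2 \, d\lambda\Bigr)^{1/2},
\end{equation*}
with $\Lambda$ the same maximal net in $(0,1)$ used there.

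Taking $L^2$ norms and applying Plancherel, the first term becomes
\begin{equation*}
\sum_{\lambda_- \in \Lambda} \|F_{\lambda_-}^{\boldsymbol\theta} f\|_2^2 = \int |\widehat f(\xi)|^2 \sum_{n=1}^N |\widehat\varphi(\xi - \theta_n)|^2 \sum_{\lambda_- \in \Lambda} |\sigma(\lambda_-, \xi - \theta_n)|^2 \, d\xi,
\end{equation*}
where the disjoint-support factorization of $|M|^2$ was used. By \eqref{e:sig4}, the inner sum over $\lambda_-$ is $O(1)$ uniformly in $\xi - \theta_n$, and by disjoint support the sum over $n$ is bounded by $\|\widehat\varphi\|_\infty^2 \lesssim 1$, giving the desired $\|f\|_2^2$ bound. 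The second term is handled identically, replacing $\sigma$ by $(\lambda/\rho^2)\partial_\lambda \sigma$, whose square-sum also enjoys \eqref{e:sig4} by the hypothesis of Lemma~\ref{l:sig}.

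No genuine obstacle is expected; the only subtlety is precisely the disjoint-support observation, which is what allows the $N$-dependence to disappear and gives an absolute-constant bound independent of $N$. All other steps are literal repetitions of the proof of Lemma~\ref{l:sig}.
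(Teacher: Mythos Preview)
Your proposal is correct and follows exactly the approach the paper indicates: the paper's own justification is simply that the proof of Lemma~\ref{l:sig} depends only on Plancherel calculations and so extends to this setting thanks to the Fourier cutoff $\widehat\varphi$, and you have spelled out precisely this, with the disjoint-support observation for the translates $\widehat\varphi(\,\cdot\,-\theta_n)$ being the mechanism by which the cutoff eliminates the $N$-dependence.
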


Namely, the proof of Lemma~\ref{l:sig} depends only on some Plancherel calculations, and so extends to the 
setting above, due to the presence of the Fourier cut off function $ \widehat \varphi  $.  

We see that these two cases will follow if we show that the Fourier transform of 
$ \phi _{k, \lambda , l}$ satisfies the hypotheses of Lemma~\ref{l:sig}. 
For $  l \leq 0$, it does with $ \rho =1$, and for $ l >0$, it does 
with $ \rho =  2 ^{l/2}$,    since $ \lambda \approx 2 ^{-2k + l}$.  
By inspection, this is the consequence of the next two lemmas.

\begin{lemma}\label{EST1}
For $l \leq 0$, there holds 
\begin{equation}\label{e:EST1}
|\widehat{ \phi_{k,\lambda,l} }(\xi)| \lesssim \min\{ 2^k|\xi| , (2^k |\xi| )^{-1} \}.
\end{equation}
For $l \geq 0$,  there holds for $ 0< c < 1 < C < \infty $
 \begin{equation}\label{e:EST2}
 |\widehat{ \phi_{k,\lambda,l} }(\xi)| 
\lesssim \begin{cases} 2^{k-2l}|\xi| &\mbox{if } |\xi| \leq 2^{l-k} \\ 
2^{-l/2} &\mbox{if }  c 2 ^{l-k} \leq  |\xi| \leq C 2^{l-k} \\
(2^{k}|\xi|)^{-1} & |\xi| \ge C 2^{l-k}. \end{cases}  
\end{equation}
\end{lemma}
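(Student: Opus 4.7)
The plan is to substitute $u = t/2^k$ and set $a := \lambda 2^{2k}$, $b := 2^k\xi$. The constraint $1 \leq \lambda 2^{2k-l} < 2$ in the definition of $\phi_{k,\lambda,l}$ forces $a \in [2^l, 2^{l+1})$, so $a \asymp 2^l$. Since $\psi$ is odd, the integrand $e(au^2)\psi(u)$ is odd in $u$, so $\mu(k,\lambda,l) = \int e(au^2)\psi(u)\,du = 0$ and the correction term in the definition of $\phi_{k,\lambda,l}$ vanishes. It therefore suffices to bound
\[
 I(a,b) := \int e(au^2 - bu)\,\psi(u)\,du = \widehat{\phi_{k,\lambda,l}}(\xi).
\]

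For \eqref{e:EST1} (case $l \leq 0$, so $|a| \leq 2$): if $|b| \leq 1$, Taylor expand $e(-bu) = 1 + O(|bu|)$ and use $\mu = 0$ to kill the leading term, obtaining $|I| \lesssim |b|$; if $|b| \geq 4|a|$, then on $\supp\psi$ the phase derivative $|2au - b|$ is $\gtrsim |b|$, and one integration by parts gives $|I| \lesssim 1/|b|$; intermediate $|b|$ are covered by the trivial bound $|I| \lesssim 1$. Combining these yields $|I| \lesssim \min\{|b|, 1/|b|\} = \min\{2^k|\xi|,(2^k|\xi|)^{-1}\}$, which is \eqref{e:EST1}.

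For \eqref{e:EST2} (case $l > 0$, $a \asymp 2^l$ large) I would complete the square,
\[
 I(a,b) = e\!\bigl(-\tfrac{b^2}{4a}\bigr)\, J(a,u^*), \qquad J(a,u^*) := \int e(av^2)\,\psi(v+u^*)\,dv, \qquad u^* := \tfrac{b}{2a},
\]
and split into three regimes according to $|u^*|$. When $|u^*|$ is of order one, the stationary point $v = 0$ of $e(av^2)$ can lie in $\supp\psi(\,\cdot + u^*)$, and the standard method of stationary phase gives $|J| \lesssim |a|^{-1/2} \asymp 2^{-l/2}$, matching the middle bound. When $|u^*|$ is large (equivalently $|b| \geq C\cdot 2^l$ for $C$ sufficiently large) the stationary point lies well outside $\supp\psi$, the phase derivative of $I$ satisfies $|2au - b| \geq |b|/2$ on $\supp\psi$, and one integration by parts gives $|I| \lesssim 1/|b| = (2^k|\xi|)^{-1}$, matching the third bound.

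The main obstacle is the first bound $2^{k-2l}|\xi|$ in the small-$|u^*|$ regime: a brute-force single IBP on $I(a,b)$ only yields $|I| \lesssim 1/|a|$, which misses the required extra factor $|b|\cdot 2^{-l}$. To recover it I exploit the oddness of $\psi$ through the fundamental theorem of calculus: because $\int e(av^2)\psi(v)\,dv = 0$,
\[
 J(a,u^*) = \int_0^{u^*}\!\!\int e(av^2)\,\psi'(v+s)\,dv\,ds.
\]
For $|s| \leq |u^*|$ sufficiently small, $\psi'(\,\cdot + s)$ remains supported away from $v = 0$, so a single integration by parts on the inner integral (using $|\partial_v (av^2)| = 2|a||v| \gtrsim |a|$ on that support) produces $\lesssim 1/|a|$ uniformly in $s$. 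Integrating over $s \in [0, u^*]$ gives $|J| \lesssim |u^*|/|a| \asymp |b|/|a|^2 \asymp 2^{k-2l}|\xi|$, as required. This FTC reduction, which converts the oddness of $\psi$ into the linear-in-$|b|$ gain, is the technical heart of the lemma.
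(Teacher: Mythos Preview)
Your proof is correct. You gain a simplification the paper does not exploit: since $\psi$ is odd and $t\mapsto e(\lambda t^2)$ is even, the integrand defining $\mu(k,\lambda,l)$ is odd, so in fact $\mu\equiv 0$ and the correction term $\mu\,\varphi_{k_l}$ drops out entirely. The paper instead proves and carries the bound $|\mu|\lesssim\min\{2^l,2^{-l}\}$ and handles the two pieces of $\phi_{k,\lambda,l}$ separately throughout. For $l\le 0$, and for the middle and large-$|\xi|$ ranges when $l>0$, your arguments (Taylor expansion at $\xi=0$, van der Corput's second-derivative test, non-stationary integration by parts) coincide with the paper's. The one genuine methodological difference is the small-$|\xi|$ case with $l>0$: you complete the square, invoke the fundamental theorem of calculus in the shift parameter $u^*$ (using $J(a,0)=0$ from oddness), and then a single integration by parts; the paper instead bounds $\partial_\xi\widehat{\phi_{k,\lambda,l}}$ directly by \emph{two} integrations by parts in $t$, using that the phase derivative $2\lambda t-\xi$ is $\gtrsim 2^{l-k}$ on $\supp\psi_k$ when $|\xi|<c\,2^{l-k}$. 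Both routes rely on $\psi$ being supported away from the origin and both yield the bound $2^{k-2l}|\xi|$; yours makes the cancellation mechanism more transparent, while the paper's avoids the change of variables. (Your FTC step needs $|u^*|$ small enough that $\supp\psi'(\cdot+s)$ stays away from $0$; this is consistent with the paper, which also only establishes the first line of \eqref{e:EST2} for $|\xi|<c\,2^{l-k}$ with $c$ small.)
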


\begin{proof}
 For the case of $ l <0$,   recall from  \eqref{e:kl} that $ k _{l} =k$. 
 Since $ \widehat{ \phi_{k,\lambda,l} }(0)=0$ by construction, we estimate the derivative 
 in $ \xi $.   The derivative is 
\begin{equation} \label{e:Dsmall}
\bigl\lvert \frac d {d \xi }  \widehat{ \phi_{k,\lambda,l} }(\xi) \bigr\rvert 
= \Bigl\lvert  \int e (\lambda t ^2 - \xi t) t \{ \psi _{k} (t) 
- \mu (k,\lambda ,l)  \varphi_{k}(t)\}   \; dt    \Bigr\rvert \lesssim 2 ^{k}. 
\end{equation}
The first half of the bound \eqref{e:EST1} follows.  

The second half which applies when $ \lvert  \xi \rvert \geq C2 ^{k} $.  
Then, by \eqref{e:mu<}, 
\begin{equation*}
\bigl\lvert  \mu (k,\lambda ,l)   \widehat {  \varphi_{k_l}} \bigr\rvert \lesssim  2 ^{l}(2 ^{k} \lvert  \xi \rvert ) ^{-1} . 
\end{equation*}
This is better than what is claimed. On the other hand, in the integral 
\begin{equation*}
 \int e (\lambda t ^2 - \xi t ) \psi _{k} (t) \; dt 
\end{equation*}
the derivative of the phase function $ \lambda t ^2 - \xi t$ is at least $ c \lvert  \xi \rvert $ on the support of $ \psi $, 
so the bound as claimed follows in this case.

\medskip 

For the case of $ l \geq 0$, we are in the crucial oscillatory case.    Recall from \eqref{e:kl} that $ k_l = k - l$. 
Apply the second van der Corput test to the integral 
\begin{equation*}
\int e (\lambda t ^2 - \xi t) \psi _k (t) \;dt. 
\end{equation*}
It follows that the integral is bounded by 
\begin{equation*}
\left( \lVert \psi_k \rVert _{\infty } + \| \psi_k' \|_1 \right) \lambda ^{-1/2} \approx 2 ^{- l/2}. 
\end{equation*}
In particular, note that the  phase function $ \lambda t ^2 - \xi t$ has a critical point at $ t = 2 \lambda / \xi $.  
This critical point will be close to the support of $ \psi _{k}$ provided  $ \lvert  \xi \rvert  \simeq  2 ^{l-k}$.   
But,  $ \phi _{\lambda , l}$ is the difference of two terms.  In the second term, 
we have been careful to adjust the scale of $ \varphi _{k_l}$ in \eqref{e:phikl}, so by the 
estimate \eqref{e:mu<},  the case of $ c 2 ^{l-k} \leq \lvert  \xi \rvert < C 2 ^{l-k} $ in \eqref{e:EST2} follows.

If $ \lvert  \xi \rvert < c  2 ^{l-k}$, the dominant factor is that we have chosen 
$ \phi _{\lambda ,l}$ to have zero Fourier mode equal to $ 0$.  We then argue that the 
derivative is small, as we did in \eqref{e:Dsmall}. There are two terms in the derivative. 
In the first place, 
\begin{equation*}
\Bigl\lvert 
\frac d {d \xi } \int e (\lambda t ^2 - \xi t ) \psi _{k} (t) \; dt 
\Bigr\rvert  \lesssim 2 ^{k-2l}. 
\end{equation*}
Indeed, if $0< c<1$ is sufficiently small, then the derivative of the phase function $ \lambda t ^2 - \xi t$ 
is at least $ c 2 ^{-k+l} $ on the support of $ \psi _k (t)$. So the inequality follows by two integration by parts. 
And,  since $ k_l = k - l $, and $ \varphi $ has constant Fourier transform in a neighborhood of the  origin by \eqref{e:1phi}, 
the same derivative estimate holds for $ \mu (k, \lambda ,l) \widehat {\varphi _{k_l} } (\xi ) $.  
The case of $ \lvert  \xi \rvert < c 2 ^{l-k} $ follows. 

The last case in \eqref{e:EST2} is $ \lvert  \xi \rvert > C 2 ^{l-k} $. 
Observe that the estimate holds for $ \mu (k, \lambda ,l) \widehat {\varphi _{k_l} } (\xi ) $,  
\eqref{e:1phi}.  The other term is 
\begin{equation*}
 \int e (\lambda t ^2 - \xi t ) \psi _{k} (t) \; dt  . 
\end{equation*}
But, we have the lower bound on the derivative of the phase function of $ c \lvert  \xi \rvert $ 
on the support of $ \psi _k$, so the bound follows. 
\end{proof}

We have the following observation concerning the $\lambda$-derivative of $\phi_{k,\lambda,l}$:

\begin{lemma}\label{EST2}
The  derivatives
\[ 2^{-2k} \partial_{\lambda} \widehat{ \phi_{k,\lambda,l} }(\xi) , \qquad  2 ^{-2k+ l} < \lambda  < 2 ^{-2k+1+1}, 
\]
satisfy the   estimates  of Lemma~\ref{EST1}.  
\end{lemma}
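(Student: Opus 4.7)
The plan is to reduce Lemma~\ref{EST2} to Lemma~\ref{EST1} by a direct differentiation and reweighting. First I would observe that the interval $2^{-2k+l}<\lambda<2^{-2k+l+1}$ is precisely the range in which the indicator $\mathbf 1_{[1,2)}(\lambda 2^{2k-l})$ equals $1$, so $k=k(\lambda,l)$ is constant in $\lambda$ and $\psi_{k,\lambda,l}=\psi_k$ does not depend on $\lambda$. Consequently, inside this range
\[
\partial_\lambda \widehat{\phi_{k,\lambda,l}}(\xi)
= \int 2\pi i t^2 \, e(\lambda t^2 - \xi t)\,\psi_k(t)\,dt
\;-\; (\partial_\lambda \mu(k,\lambda,l))\,\widehat{\varphi_{k_l}}(\xi),
\]
where $\partial_\lambda \mu(k,\lambda,l)=\int 2\pi i t^2 e(\lambda t^2)\psi_k(t)\,dt$ is the zero Fourier mode of the first summand.

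Next I would normalize the extra factor of $t^2$. On the support of $\psi_k$ one has $|t|\lesssim 2^k$, so define
\[
\tilde\psi(x):= 2\pi i\,x^2\,\psi(x),\qquad
\tilde\psi_k(t):= 2^{-k}\tilde\psi(2^{-k}t) = 2^{-2k}\cdot 2\pi i\, t^2 \psi_k(t),
\]
and correspondingly $\tilde\mu(k,\lambda,l):=\int e(\lambda t^2)\tilde\psi_k(t)\,dt = 2^{-2k}\partial_\lambda\mu(k,\lambda,l)$. With these notations,
\[
2^{-2k}\partial_\lambda \widehat{\phi_{k,\lambda,l}}(\xi)
=\widehat{\tilde\phi_{k,\lambda,l}}(\xi),\qquad
\tilde\phi_{k,\lambda,l}(t):=e(\lambda t^2)\tilde\psi_k(t)-\tilde\mu(k,\lambda,l)\,\varphi_{k_l}(t).
\]
Thus the claim is that $\widehat{\tilde\phi_{k,\lambda,l}}$ enjoys the same bounds as $\widehat{\phi_{k,\lambda,l}}$.

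The key point is that $\tilde\psi$ inherits every structural property of $\psi$ that Lemma~\ref{EST1} actually used: it is smooth, odd (odd times even), supported in $[-1,1]$, and satisfies $\|\tilde\psi\|_\infty+\|\tilde\psi'\|_1\lesssim 1$. I would then run the proof of Lemma~\ref{EST1} verbatim with $\psi$ replaced by $\tilde\psi$. Specifically: (i) oddness gives $\tilde\mu(k,0,l)=0$ and, after subtracting $\tilde\mu(k,\lambda,l)\varphi_{k_l}$, the required vanishing of the zero frequency mode used in the low-$\xi$ regime; (ii) the bound $|\tilde\mu(k,\lambda,l)|\lesssim\min\{2^l,2^{-l}\}$ follows by the same integration by parts / oddness argument as \eqref{e:mu<}; (iii) in the high-$\xi$ regime, integration by parts against the nonstationary phase $\lambda t^2-\xi t$ yields the decay $(2^k|\xi|)^{-1}$; (iv) in the stationary regime $|\xi|\simeq 2^{l-k}$ (relevant only for $l\geq 0$), van der Corput's second derivative test applied to $\tilde\psi_k$ gives the $2^{-l/2}$ bound, and the choice $k_l=k-l$ ensures that $\widehat{\varphi_{k_l}}$ harmlessly absorbs the $\tilde\mu$-term in the same frequency band.

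There is no real obstacle here: once the derivative is recast as $\widehat{\tilde\phi_{k,\lambda,l}}$, the lemma is a formal corollary of Lemma~\ref{EST1}. The only thing to verify with care is that each estimate in the preceding proof invoked $\psi$ only through its smoothness, support, oddness and the trivial bounds on $\psi$ and $\psi'$; since all of these are shared by $\tilde\psi$, the identical estimates hold for $\widehat{\tilde\phi_{k,\lambda,l}}$ and the lemma follows.
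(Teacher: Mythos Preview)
Your proposal is correct and follows essentially the same approach as the paper: you differentiate, absorb the factor $2^{-2k}\cdot 2\pi i t^2$ into a new amplitude $\tilde\psi(x)=2\pi i x^2\psi(x)$, and observe that $\tilde\psi$ is odd, smooth, and compactly supported, so the proof of Lemma~\ref{EST1} applies verbatim. The paper's proof is a one-line version of exactly this observation.
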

 
\begin{proof}
By inspection, the derivative above  is an expression much like that of $ \widehat  {\phi _{k,\lambda ,l}} $, but with the function $ \psi _k  (t)$ 
replaced by $ (t2 ^{-k}) ^2 \psi (t 2 ^{-k})$.  But, this is an odd, smooth function with compact support,
which are the only properties  that were used to derive Lemma~\ref{EST1}.  The Lemma follows. 
\end{proof}

 \subsection{Case Four: $l > 10 \log_2 N$}
 The term to bound is \eqref{e:4}, which follows from summation over $ l > 10 \log_2 N$ 
 of the following inequality.  
 
 \begin{lemma}\label{l:4} For any $ l \geq 0$, there holds 
 \begin{equation}\label{e:4<}
 \bigl\| \sup_{0\leq\lambda \leq 1}  |T_\lambda^l f| \bigr\|_{2} \lesssim 2^{-l/4} \| f\|_{2}.  
 \end{equation}
 \end{lemma}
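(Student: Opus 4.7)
The plan is to split $T_\lambda^l f = \Phi_{k,\lambda,l} f + \mu(k,\lambda,l)\,\varphi_{k_l}*f$ via \eqref{e:phikl} and treat the two pieces separately. The zero-mode piece is immediate: $|\mu(k,\lambda,l)| \lesssim 2^{-l}$ by \eqref{e:mu<}, and $\sup_\lambda|\varphi_{k_l}*f| \lesssim M_{\mathrm{HL}}f$ since $\varphi_{k_l}$ is a scaled mollifier, hence $\|\sup_\lambda|\mu(k,\lambda,l)\,\varphi_{k_l}*f|\|_2 \lesssim 2^{-l}\|f\|_2$, well inside the target $2^{-l/4}\|f\|_2$. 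The work is entirely in the oscillatory piece $\Phi_{k,\lambda,l}f = \phi_{k,\lambda,l}*f$.

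For the oscillatory piece, I would decompose the supremum across the dyadic $\lambda$-intervals $I_k$ on which $k(\lambda,l)$ is constant, so $|I_k|\approx 2^{l-2k}$. By Lemma~\ref{EST1}, $\widehat{\phi_{k,\lambda,l}}$ concentrates on the annulus $|\xi|\approx 2^{l-k}$ with peak size $2^{-l/2}$, so $\Phi_{k,\lambda,l}f$ effectively sees only the Littlewood–Paley piece $P_{l-k}f$, with tails off the annulus decaying as in Lemma~\ref{EST1} and controllable by Lemma~\ref{l:sig} exactly as in the treatment of cases two and three. Applying $\sup_{0<\lambda\leq 1}|\Phi f|^2 \leq \sum_k \sup_{\lambda\in I_k}|\Phi f|^2$ and then the LP almost-orthogonality $\sum_k\|P_{l-k}f\|_2^2 \lesssim \|f\|_2^2$ reduces the task to the per-scale estimate $\|\sup_{\lambda\in I_k}|\Phi_{k,\lambda,l}P_{l-k}f|\|_2 \lesssim 2^{-l/4}\|P_{l-k}f\|_2$.

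I would establish this per-scale bound by rescaling $\lambda = 2^{l-2k}\mu$ and $t = 2^ks$, which turns $\Phi_{k,\lambda,l}$ into a $k$-independent convolution operator $A^\mu$ with symbol $a^\mu(\eta) = \int e(2^l\mu s^2 - \eta s)\psi(s)\,ds$ acting on a dilation of $P_{l-k}f$. The symbol is bounded by $2^{-l/2}$ on its stationary annulus $|\eta|\approx 2^l$ (where it is essentially the scaled Schrödinger multiplier $2^{-l/2}e(-\eta^2/(4\cdot 2^l\mu))$) and decays off it as in Lemma~\ref{EST1}; Lemma~\ref{EST2} gives the companion derivative bound. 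Combining the Sobolev inequality
\[
 \sup_{\mu\in[1,2)}|g(\mu)|^2 \leq 2|g(\mu_*)|^2 + 2\int|g||g'|\,d\mu,
\]
applied to $g(\mu) = A^\mu F$, with the identity $\partial_\mu a^\mu = i\pi(\eta^2/(2^l\mu^2))\,a^\mu$ on the stationary annulus (so that $\partial_\mu A^\mu$ is, on each frequency mode, a scalar multiple of $A^\mu$), the Cauchy–Schwarz integral $\int\|A^\mu F\|_2\|\partial_\mu A^\mu F\|_2\,d\mu$ can be reorganized as an oscillatory $\mu$-integral against the Schrödinger phase. This yields the desired cancellation and produces the bound $\|\sup_\mu|A^\mu F|\|_2 \lesssim 2^{-l/4}\|F\|_2$, which undoes to the required per-scale estimate.

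The main obstacle is obtaining the square-root gain in the Sobolev step: naive size estimates give $\|A^\mu\|_{2\to 2}\lesssim 2^{-l/2}$ and $\|\partial_\mu A^\mu\|_{2\to 2}\lesssim 2^{l/2}$, whose geometric mean is only $O(1)$ and yields no decay in $l$. The improvement to $2^{-l/4}$ relies essentially on the Schrödinger-like algebraic structure of the symbol, not on pointwise size bounds; this structural identification (making the derivative essentially a scalar-multiple-in-$\eta$ of the symbol) is what replaces the otherwise missing gain and is the technical heart of the proof.
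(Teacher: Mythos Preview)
The paper does not supply its own proof here; it cites Stein--Wainger \cite{MR1879821} and omits the details. Your reduction to a single-scale estimate $\|\sup_{\mu\in[1,2)}|A^\mu F|\|_2 \lesssim 2^{-l/4}\|F\|_2$ is reasonable (the off-annulus tails need somewhat more care than ``controllable by Lemma~\ref{l:sig}'', but a finer subdivision of $I_k$ handles them). The genuine gap is in the on-annulus per-scale step.

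You correctly diagnose the obstacle: the bounds $\|a^\mu\|_\infty\lesssim 2^{-l/2}$ and $\|\partial_\mu a^\mu\|_\infty\lesssim 2^{l/2}$ have geometric mean $O(1)$, so Sobolev in $\mu$ alone yields no decay. But your proposed repair does not work. The relation $\partial_\mu a^\mu(\eta)\approx i\pi(\eta^2/2^l\mu^2)\,a^\mu(\eta)$ says only that, for each fixed $\eta$, the symbol runs around a circle of radius $\approx 2^{-l/2}$ at angular speed $\approx 2^l$; this is exactly the size information already encoded in the two bounds. Once you pass to $\int\|A^\mu F\|_2\|\partial_\mu A^\mu F\|_2\,d\mu$, the phases are gone. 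If instead you keep $\int_1^{\mu(x)} g\,\overline{\partial_\mu g}\,d\mu$, the $x$-dependent upper limit coming from the supremum blocks Plancherel in $x$, and at fixed $\eta$ the integrand $a^\mu\overline{\partial_\mu a^\mu}$ is purely imaginary (since $|a^\mu|$ is essentially constant), so its real part contributes nothing. There is no ``oscillatory $\mu$-integral'' available to reorganize.

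The Stein--Wainger argument is $TT^\ast$ after linearizing $\mu=\mu(\xi)$. The kernel of $AA^\ast$ is
\[
K(\xi,\xi')=\int e\bigl(2^l[\mu(\xi)s^2-\mu(\xi')(s-u)^2]\bigr)\,\psi(s)\,\overline{\psi(s-u)}\,ds,\qquad u=\xi-\xi',
\]
and one estimates it by van der Corput \emph{in the spatial variable $s$}: if $|\mu(\xi)-\mu(\xi')|\gtrsim|u|$ the second-derivative test gives $|K|\lesssim (2^l|u|)^{-1/2}$, while if $|\mu(\xi)-\mu(\xi')|\ll|u|$ the phase has no critical point on the support and the first-derivative test gives $|K|\lesssim (2^l|u|)^{-1}$. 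Either way $|K|\lesssim\min\{1,(2^l|u|)^{-1/2}\}$, which integrates over $|u|\lesssim 1$ to $O(2^{-l/2})$; Schur's test then yields $\|AA^\ast\|\lesssim 2^{-l/2}$ and hence the $2^{-l/4}$ bound. The decisive point is that the gain comes from oscillation in $s$, where the stopping-time function $\mu(\xi)$ does not interfere, rather than from oscillation in $\mu$.
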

 
The proof of  Stein-Wainger \cite[Thm 1]{MR1879821} contains this result, with the 
right hand side replaced by $ 2 ^{- l \delta }$, for some $ \delta >0$.  
In this case, one can take $ \delta = 1/4$, but we omit the proof.

\section{Some Approximations} \label{s:circle}
Number-theoretic considerations dominate this section, namely a quantitative decomposition of the 
exponential sum given in \eqref{e:Mlb}. 
This decomposition is given in these terms.  
Let 
\[ \mathbf 1_{[-1/10,1/10]} \leq \chi \leq \mathbf 1_{[-1/5,1/5]} \]
be a smooth even bump function. The rationals in the two torus are the union over $ s\in \mathbb N $ of the collections 
\begin{equation}\label{e:Rs}
\R_s := \{ (A/Q,B/Q) \in \TT^2 : (A,B,Q) = 1, 2^{s-1 }\leq Q < 2^{s} \}.
\end{equation}
For each $s , j \in \mathbb N $, define the multiplier
\begin{align}\label{e:Ljs}
 L_{j,s}(\lambda,\beta)& := \sum_{ (A/Q,B/Q)\in  \R_s} S(A/Q,B/Q) H_j(\lambda - A/Q, \beta - B/Q) 
\\ & \qquad \qquad \qquad  \times \chi_s(\lambda- A/Q) \chi_s(\beta - B/Q), 
 \\ 
  \noalign{\noindent where  $\chi_s(t) := \chi(10^s t)$;  a continuous analogue of the sum is given by }   \label{e:Hj}
   &H_j(x,y) := \int e(x t^2 - y t) \psi_j(t) \; dt  ; 
   \\ \noalign{\noindent and a complete Gauss sum is given by }
    \label{e:gauss} 
    &S(A/Q,B/Q) := \frac{1}{Q} \sum_{r=0}^{Q-1} e(A/Q \cdot r^2 - B/Q \cdot r). 
\end{align}

Throughout this section, $ 0 < \epsilon \leq \tfrac 14$ denotes a fixed sufficiently small constant, 
that we will not attempt to optimize.  Then, define  $ L (\lambda , \beta ) = \sum_{j=1} ^{\infty } L _{j} (\lambda , \beta )$, 
where 
\begin{equation}  \label{e:Lj}
L _{j} (\lambda , \beta ) = \sum_{s \;:\;  1\leq s \leq  \epsilon j }  L _{j,s} (\lambda , \beta ).  
\end{equation}
The approximation theorem is as below. It is as expected, except for the derivative information 
on $ E_j (\lambda , \beta )$.   This additional information is easy to derive, and essential in the next section, 
which is why it is highlighted here. 

\begin{theorem}\label{t:smooth}  There  are choices of $ 0< \epsilon , \delta < \tfrac 14$ so that we can write 
$ M (\lambda , \beta ) = L (\lambda , \beta ) + E (\lambda , \beta )$, where $ L (\lambda , \beta )$ is defined 
in \eqref{e:Lj}, and the term $ E (\lambda , \beta )$ satisfies 
\begin{gather}\label{e:E1}
 E (\lambda , \beta ) = \sum_{j=1} ^{\infty } E _{j} (\lambda , \beta ) ,  
 \\  \label{e:E<}
   \lvert  E _j (\lambda , \beta )\rvert \lesssim 2 ^{- \delta  j},  
 \\ \label{e:dE<}
 \Bigl\lvert  \frac d {d \lambda } E _{j} (\lambda , \beta ) \Bigr\rvert \lesssim 2 ^{2j} . 
\end{gather}
The last two estimates are uniform over $ (\lambda , \beta ) \in \mathbb T ^2 $. 
\end{theorem}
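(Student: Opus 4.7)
The plan is to apply the Hardy--Littlewood circle method to each dyadic piece. Using \eqref{e:1/t}, decompose
\[
M(\lambda,\beta) = \sum_{j \geq 1} M_j(\lambda,\beta), \qquad M_j(\lambda,\beta) := \sum_{m \neq 0} \psi_j(m)\, e(\lambda m^2 - \beta m).
\]
For each scale $j$, I would partition $\TT^2$ into level-$s$ major arcs (boxes of radius $\approx 10^{-s}$ centered at $(A/Q, B/Q) \in \R_s$, $1 \leq s \leq \epsilon j$) and a minor-arc complement, then set $E_j := M_j - L_j$ and control each piece on the corresponding region.

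On a level-$s$ major arc, write $(\lambda,\beta) = (A/Q+\eta_1, B/Q+\eta_2)$ with $|\eta_i| \lesssim 10^{-s}$, and decompose $m = rQ + u$ with $0 \leq u < Q$. The phase factorizes as
\[
e(\lambda m^2 - \beta m) = e\bigl((A/Q) u^2 - (B/Q) u\bigr) \cdot e\bigl(\eta_1 m^2 - \eta_2 m\bigr),
\]
because $(A/Q)(rQ)^2$ and $(B/Q)(rQ)$ are integers and the cross terms in $(rQ+u)^2$ vanish mod $1$ against $A/Q$. Replacing $\psi_j(rQ+u)$ by $\psi_j(rQ)$ introduces an error of size $O(Q/2^j)$ per term; the $u$-sum then yields $Q \cdot S(A/Q, B/Q)$, and a Riemann-sum/Poisson comparison converts the remaining $r$-sum against $e(\eta_1 m^2 - \eta_2 m)$ into the integral $H_j(\eta_1,\eta_2)$. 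Assembling over $s \leq \epsilon j$ and $(A/Q,B/Q)\in \R_s$ reconstructs $L_j$ with total error $\lesssim 2^{-(1-C\epsilon) j}$.

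On the minor arcs---where no rational approximation of $(\lambda,\beta)$ to denominator $\leq 2^{\epsilon j}$ succeeds---Weyl's inequality for the quadratic phase yields $|M_j(\lambda,\beta)| \lesssim 2^{-\delta_0 j}$ for some $\delta_0 = \delta_0(\epsilon) > 0$, while $L_j$ vanishes identically on this region by construction of the $\chi_s$ cutoffs. Together these give \eqref{e:E<} upon choosing $\epsilon$ small and $\delta < \min(\delta_0, 1-C\epsilon)$.

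For the derivative bound \eqref{e:dE<}, differentiating term-by-term,
\[
\partial_\lambda M_j(\lambda,\beta) = 2\pi i \sum_{m} m^2 \psi_j(m)\, e(\lambda m^2 - \beta m) = O(2^{2j})
\]
since $|m| \sim 2^j$ on $\supp \psi_j$ and $\sum_m |\psi_j(m)| = O(1)$; the analogous computation for the continuous integral gives $|\partial_{\eta_1} H_j(\eta_1,\eta_2)| \lesssim 2^{2j}$, so $\partial_\lambda L_j = O(2^{2j})$ as well (at any fixed $(\lambda,\beta)$, only boundedly many terms in the sum defining $L_j$ are non-zero due to the disjointness of the $\chi_s$-boxes). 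The main obstacle will be the bookkeeping in the major-arc step: one must verify that the $\chi_s$-supported boxes are pairwise disjoint across all $s \leq \epsilon j$ and $(A/Q,B/Q) \in \R_s$ (otherwise $L_j$ double-counts), and ensure the sum-to-integral approximation error accumulates favorably when aggregated over the $O(2^{3\epsilon j})$ contributing rationals. Balancing this against the Weyl exponent $\delta_0$ is what ultimately forces the constraint $\epsilon < \delta_0/C$, fixing both $\epsilon$ and $\delta$.
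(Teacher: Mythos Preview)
Your overall architecture is right and matches the paper's, but there is a genuine gap in the major-arc step. You take the level-$s$ major box to have radius $\sim 10^{-s}$ (the support scale of $\chi_s$), and on such a box you claim the $u$-sum yields $Q\cdot S(A/Q,B/Q)$ and the $r$-sum converts by Riemann/Poisson into $H_j$. But after your exact phase factorization, the residual factor $e(\eta_1 m^2-\eta_2 m)$ still depends on $u$ through $m=rQ+u$; to extract the Gauss sum you must also replace $e(\eta_1(rQ+u)^2-\eta_2(rQ+u))$ by $e(\eta_1(rQ)^2-\eta_2(rQ))$, which costs $|\eta_1|\cdot 2^j\cdot Q$ per term. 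With $|\eta_1|\lesssim 10^{-s}$ and $Q\lesssim 2^s$ this is of order $2^j\cdot(2/10)^s$, which is enormous for $s\le\epsilon j$. The Riemann-sum step fails for the same reason: the integrand $t\mapsto e(\eta_1 t^2-\eta_2 t)\psi_j(t)$ oscillates at frequency $\sim|\eta_1|\cdot 2^j$ on the support of $\psi_j$, and spacing $Q$ times this frequency is again $\sim 2^j(2/10)^s\gg 1$. So the approximation $M_j\approx S\cdot H_j$ with error $2^{-(1-C\epsilon)j}$ is simply false on boxes of radius $10^{-s}$.

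The paper circumvents this by using much smaller major boxes $\mathfrak M_j(A/Q,B/Q)$ of radius $2^{(\epsilon-2)j}$ in $\lambda$ and $2^{(\epsilon-1)j}$ in $\beta$; on these the approximation does hold (Lemma~\ref{MAJor}). But then $L_j$ does \emph{not} vanish off $\mathfrak M_j$ (the $\chi_s$-boxes are far larger), so your claim ``$L_j$ vanishes identically on this region'' fails for the minor arcs that matter, and one needs a separate bound on $L_j$ there. This comes from the van der Corput decay $|H_j(x,y)|\lesssim\|(x,y)\|_j^{-1/2}$ of \eqref{e:Hj<}, used in Lemma~\ref{l:Lj<} --- an ingredient your proposal never invokes. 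The same lemma also handles the cross-$s$ overlaps you flagged as bookkeeping: on $\mathfrak M_j(A/Q,B/Q)$ with $(A/Q,B/Q)\in\R_s$, the contributions $L_{j,s'}$ for $s'\neq s$ are controlled by the $H_j$-decay, not by disjointness of the $\chi_{s'}$-boxes. Your derivative argument for \eqref{e:dE<} is fine.
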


The estimates on $ E_j$  match the hypotheses of Lemma~\ref{SOB}.
We turn to the proof, which is a standard application of the Hardy-Littlewood method in exponential sums. 
A core definition in this method is that of \emph{major boxes.} 

\begin{definition}  \label{d:major}
For $ (A/Q,B/Q)\in  \mathcal R_s$, where $ s\leq j \epsilon $, define the \emph{$ j$th major box at $ (A/Q, B/Q)$} to be 
the rectangle in $ \mathbb T ^2 $ given by 
\begin{equation}\label{e:major}
\mathfrak{M}_j(A/Q,B/Q) := \left\{ (\lambda, \beta) \in \mathbb{T}^2 : | \lambda - A/Q| \leq 2^{(\epsilon - 2)j},
| \beta - B/Q| \leq 2^{(\epsilon - 1)j} \right\}.
\end{equation}
\end{definition}

We collect the major boxes 
\begin{equation}\label{e:Major}
 \mathfrak{M}_j := \bigcup_{(A,B,Q) = 1 : Q \leq 2^{6j\epsilon} } \mathfrak{M}_j(A/Q,B/Q). 
\end{equation}
The $ 6\epsilon $ above plays a role in the proof of \eqref{e:minor}, which is a standard fact in the 
Hardy-Littlewood Circle method.

The union above is over  disjoint sets:  if $(\lambda,\beta) \in \mathfrak{M}_j(A/Q,B/Q) \cap \mathfrak{M}_j(A'/Q',B'/Q')$,  and 
$ A/Q \neq A'/Q'$, then 
\[ 2 \cdot 2^{(\epsilon-2)j} \geq |A/Q - \lambda| + |A'/Q' - \lambda| \geq |A/Q - A'/Q'| \geq \frac{1}{2^{6j\epsilon}},\]
which is a contradiction for $ 0 < \epsilon < 1/ 7 $.   If $ A/Q=A'/Q'$, then necessarily  necessarily 
$B/Q \neq B'/Q'$ and the same argument applies. 

Set 
\begin{equation} \label{e:Mjdef} 
 M_j(\lambda,\beta) := \sum_{m} e(\lambda m^2 - \beta m ) \psi_j(m).  
\end{equation}
Above, $ \psi _j$ is as in \eqref{e:1/t}.  

On any fixed major box, we have this approximation of $ M _{j} (\lambda , \beta )$, which is at the 
core of the proof of Theorem~\ref{t:smooth}.

\begin{lemma}\label{MAJor}  For $ 1\leq s \leq \epsilon j$,  $(A/Q,B/Q)\in  \mathcal R_s$,  and 
$ (\lambda , \beta ) \in \mathfrak M _{j} (A/Q, B/Q)$,  we have the approximation 
\begin{equation}\label{e:smooth}
 M_j(\lambda, \beta) = S(A/Q,B/Q) H_j(\lambda - A/Q, \beta - B/Q) + O(2^{(3\epsilon -1 )j }).
\end{equation}
The terms  above are defined in \eqref{e:Mjdef}, \eqref{e:gauss}, and \eqref{e:Hj}, respectively.  
\end{lemma}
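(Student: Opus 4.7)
The approach is the standard Hardy--Littlewood circle method major arc computation. Fix $(A/Q, B/Q) \in \R_s$ with $1 \leq s \leq \epsilon j$, and write $\lambda = A/Q + \eta$, $\beta = B/Q + \xi$ for $(\lambda,\beta) \in \mathfrak{M}_j(A/Q,B/Q)$, so that $|\eta| \leq 2^{(\epsilon-2)j}$ and $|\xi| \leq 2^{(\epsilon-1)j}$. The plan is to partition the sum in \eqref{e:Mjdef} by residue class modulo $Q$, which will isolate the Gauss sum factor, then approximate the remaining smooth sum by its continuous counterpart $H_j$.

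First, I would parametrize $m = qQ + r$ with $r \in \{0,1,\dots,Q-1\}$ and $q \in \mathbb{Z}$. Since $A(qQ+r)^2/Q - B(qQ+r)/Q \equiv Ar^2/Q - Br/Q \pmod{1}$, this gives the factorization
\begin{equation*}
 M_j(\lambda,\beta) = \sum_{r=0}^{Q-1} e\bigl(A r^2/Q - B r/Q\bigr) \sum_{q \in \mathbb{Z}} F(qQ+r),
\end{equation*}
where $F(t) := e(\eta t^2 - \xi t)\, \psi_j(t)$ depends only on the perturbation $(\eta,\xi)$, not on $r$.

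Next, I would approximate $\sum_q F(qQ+r)$ by $Q^{-1}\int F(t)\,dt = Q^{-1} H_j(\eta,\xi)$. A standard Riemann sum estimate (integration by parts on each interval $[qQ+r, (q+1)Q+r]$) yields
\begin{equation*}
 \Bigl| \sum_{q} F(qQ+r) - \frac{1}{Q}\int_{\RR} F(t)\, dt \Bigr| \lesssim \|F'\|_1.
\end{equation*}
The $r$-sum of the main term reproduces $S(A/Q,B/Q)\cdot H_j(\lambda-A/Q,\beta-B/Q)$ by the definition \eqref{e:gauss} of the complete Gauss sum, delivering the claimed principal term.

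The only remaining task is to control the error. On the support of $\psi_j$, one has $|t|\lesssim 2^j$, $|\psi_j(t)| \lesssim 2^{-j}$, and $|\psi_j'(t)| \lesssim 2^{-2j}$, so the major box constraints give
\begin{equation*}
 |F'(t)| \lesssim \bigl(|\eta t| + |\xi|\bigr)|\psi_j(t)| + |\psi_j'(t)| \lesssim 2^{(\epsilon-1)j}\cdot 2^{-j} + 2^{-2j} \lesssim 2^{(\epsilon-2)j},
\end{equation*}
hence $\|F'\|_1 \lesssim 2^{(\epsilon-1)j}$. Summing the error over the $Q < 2^{\epsilon j}$ residues yields total error $\lesssim 2^{\epsilon j}\cdot 2^{(\epsilon-1)j} = 2^{(2\epsilon-1)j} \leq 2^{(3\epsilon-1)j}$, as required. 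There is no genuine obstacle here; the only mild care is in verifying that the perturbation sizes permitted by the major box definition \eqref{e:major} are precisely matched to the scale $2^j$ of $\psi_j$ so that the derivative bound on $F$ is under control. The slack $2^{2\epsilon j}$ to $2^{3\epsilon j}$ in the error exponent leaves room for this, and for any minor losses in the constants.
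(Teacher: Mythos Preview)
Your proof is correct and follows essentially the same major-arc strategy as the paper: split $m$ into residue classes modulo $Q$ to extract the Gauss sum, then replace the remaining smooth sum by the integral $H_j$. Your packaging is in fact slightly cleaner than the paper's, which first approximates the phase $\lambda m^2-\beta m$ by $r^2A/Q-rB/Q+(pQ)^2\eta-(pQ)\xi$ and the amplitude $\psi_j(pQ+r)$ by $\psi_j(pQ)$ before comparing the resulting $r$-independent sum to the integral; you instead absorb both approximations into the single Riemann-sum bound $\bigl|\sum_q F(qQ+r)-Q^{-1}\int F\bigr|\lesssim\|F'\|_1$, which yields the same $O(2^{(2\epsilon-1)j})$ error after summing over residues.
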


\begin{proof}
Throughout the proof we write 
\begin{equation}\label{e:etas}
 \lambda = A/Q + \eta_2, \ \beta = B/Q + \eta_1,
\end{equation}
where $|\eta_2| \leq 2^{(\epsilon-2)j}$, and $|\eta_1|\leq 2^{(\epsilon-1)j}$.

The sum $ M _{j} (\lambda , \beta )$  is over integers, positive and negative,  in the support of $ \psi _j $.  
We  consider the sum over positive $m$, and decompose  into residue classes $ \operatorname {mod} Q$. 
Thus write $ m = pQ+r$, where $0 \leq r < Q \leq 2^{j\epsilon}$, 
and the integers $ p$ take values in an interval $ [c,d]$, where $ c =d/ 8   \approx 2 ^{j (1- \epsilon )}$, 
 in order to cover the support of $ \psi _j$. 

The argument of the exponential in \eqref{e:Mjdef} is, after reductions modulo 1,  
\[ \aligned
\lambda m^2 - \beta m &= (A/Q + \eta_2)(p Q + r )^2 - (B/Q + \eta_1)(pQ + r) \\
&\equiv r^2 A/Q - rB/Q + (pQ)^2 \eta_2 - pQ \eta_1 + O(2^{j(2\epsilon-1)}). \endaligned\]
That is, we can write 
\begin{equation*}
e (\lambda m ^2 - \beta m) = 
e (r^2 A/Q - rB/Q + (pQ)^2 \eta_2 - pQ\eta_1 ) + O(2^{j(2\epsilon-1)}).  
\end{equation*}

Then, we can write the sum $ \sum _{m \geq 0}
e(\lambda m^2 - \beta m) \psi_j(m)$ as follows.
\begin{align}
 &
\sum_{p \in I} \sum_{r=0}^{Q-1} 
e((r^2 A/Q - rB/Q + (pQ)^2 \eta_2 - pQ \eta_1 ) \psi_j(pQ + r)  + O(2^{(2\epsilon -1)j}) \\
&=   \sum_{r=0}^{Q-1}  e(r^2 A/Q - r B/Q)  \times \sum_{p \in I} 
e( \eta_2 (pQ)^2 - \eta_1pQ)\psi_j(pQ)  + O(2^{(2\epsilon -1)j}),
\\  \label{e:Ssum}
&= S (A/Q, B/Q) \times Q\sum_{p \in I} 
e( \eta_2 (pQ)^2 -\eta_1pQ)\psi_j(pQ)    + O(2^{(2\epsilon -1)j}). 
\end{align}
Above, we have appealed to several elementary steps. 
One of these is that $ \sum_{j} \lvert  \psi _j (m)\rvert \lesssim 1 $. 
Some additional terms in $r$ have been added,  so that the sum over $ p$ and $ r$ are over independent sets. 
These additions are absorbed into the Big-$O $ term.  
The argument of $ \psi _j $ is changed from $ pQ+r$ to $ pQ$, in view of the fact that 
the derivative of $ \psi _j$ is at most $ 2 ^{-2j}$, with the  change also being  absorbed into the Big-$O$ term.  
Finally, we appeal to the definition of the Gauss sum in \eqref{e:gauss} in order 
to have $ S (A/Q, B/Q) $ appear in the last line.

Comparing \eqref{e:Ssum} to the desired conclusion \eqref{e:smooth}, we show that 
\begin{equation}\label{e:SSum}
Q\sum_{p \in I}  e( \eta_2 (pQ)^2 - \eta_1 pQ)  \psi_j(pQ)   = \int_{0}^\infty e(\eta_2 t^2  -\eta_1 t) \psi_j(t) \; dt + O(2^{(2\epsilon - 1)j}). 
\end{equation}
The same argument to this point will apply to the sum over negative $ m$, so that our proof will then be complete.

But the proof of \eqref{e:SSum} is straight forward.  For fixed $ p\in I$, and $ 0\leq t \leq Q$, we have 
\begin{align*}
\bigl\lvert 
 e( \eta_2 (pQ)^2  &- \eta_1 pQ)   \psi_j(pQ) -  e( \eta_2 (pQ+t)^2 - \eta_1 (pQ+t))  \psi_j(pQ+t) 
\bigr\rvert
\\& \lesssim  
\lvert    e (\eta_2 (pQ)^2) - e (\eta_2 (pQ+t)^2)\rvert 2 ^{-j} 
+ \lvert    e (-\eta _1 pQ ) - e (-\eta _1 (pQ+t))\rvert  2 ^{-j} 
\\& \quad +  
\lvert  \psi_j(pQ)  -  \psi_j(pQ+t)  \rvert.  
\end{align*}
Each of the three terms on the right is at most $ O (2 ^{(2 \epsilon -2)j})$. 
In view of the fact that there are $ Q \lvert  I\rvert \lesssim 2 ^{j} $ summands on the left in 
\eqref{e:SSum}, this is  all that we need to conclude the inequality in \eqref{e:SSum}. 
The three terms on the right above are bounded in reverse order. 
    Since the derivative of $ \psi _j$ is at most $ 2 ^{-2j}$, 
\begin{equation*}
\lvert  \psi_j(pQ)  -  \psi_j(pQ+t)  \rvert \lesssim 2 ^{-2j}   t  \lesssim 2 ^{ (\epsilon -2)j},  
\end{equation*}
since $0\leq  t\leq Q \leq 2 ^{\epsilon j}$.  Recalling that $ \lvert  \eta _1\rvert \leq 2 ^{ (\epsilon -1)j} $, there holds 
\begin{equation*}
 \lvert    e (-\eta _1 pQ ) - e (-\eta _1 (pQ+t))\rvert  2 ^{-j} \leq 
 \lvert  \eta _1 \rvert t \leq   2 ^{ (2 \epsilon -2)j}.  
\end{equation*}
Recalling that $ \lvert  \eta _2\rvert\leq 2 ^{ (\epsilon -2)j} $, there holds 
\begin{equation*}
\lvert    e (\eta_2 (pQ)^2) - e (\eta_2 (pQ+t)^2)\rvert 2 ^{-j} \lesssim 
\lvert  \eta _2 \rvert p Q t 2 ^{-j} \lesssim 2 ^{ (2 \epsilon -2j)}.  
\end{equation*}
Thus, \eqref{e:SSum} holds.  

\end{proof}

These  important estimates are familiar.  

\begin{lemma}\label{l:three}  These estimates hold. 
\begin{enumerate}

\item  (Complete Gauss Sums) For any $ 0 < \nu < \frac 12 $, 
uniformly in $ A, B, Q$, 
\begin{equation}\label{e:gauss<} 
\lvert   S(A/Q,B/Q) \rvert \lesssim_\nu Q^{-\nu}, \qquad  (A,B,Q) =1. 
\end{equation}

\item  (Minor Arcs)
There is a $ \delta = \delta (\epsilon )$ so that uniformly in $ j\geq 1$,   
\begin{equation}\label{e:minor}
\lvert  M_j(\lambda,\beta)\rvert  \lesssim 2 ^{- \delta j   } , \qquad (\lambda ,\beta )\not\in \mathfrak M_j. 
\end{equation}

\item  (Oscillatory Estimate) 
Define $ \|(x,y)\|_j := 2^{2j}|x| + 2^j|y|$. The integral $ H_j$ defined in \eqref{e:Hj} satisfies 
\begin{equation}\label{e:Hj<}
 |H_j(x,y)| \lesssim \min \{ \| (x,y) \|_j , \|(x,y) \|_j^{-1/2} \}. 
\end{equation}

\end{enumerate}

\end{lemma}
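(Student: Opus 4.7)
The plan is to treat all three items as standard ingredients of the Hardy--Littlewood circle method.

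For part (i), the first step is to reduce to $(A, Q) = 1$. Setting $d = \gcd(A, Q)$, $A = dA'$, $Q = dQ'$ with $(A', Q') = 1$, the coprimality $(A, B, Q) = 1$ forces $(d, B) = 1$. Splitting $r = Q's + t$ with $0 \leq s < d$ and $0 \leq t < Q'$, the inner sum over $s$ produces $d \cdot \mathbf{1}_{d \mid Bt}$, collapsing the $t$-sum to a single residue class modulo $d$. After completing the square one reduces to the classical quadratic Gauss sum $\sum_{t=0}^{Q'-1} e(A't^2/Q')$, which is bounded by $C\sqrt{Q'}$. Tracking the factors of $d$ and the $1/Q$ normalization yields $|S(A/Q, B/Q)| \lesssim Q^{-1/2}$, which is stronger than \eqref{e:gauss<}.

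For part (iii), the key is that $\psi$ is odd, so $H_j(x, y) = \int [e(xt^2 - yt) - 1] \psi_j(t) \, dt$. Combining $|e(\cdot) - 1| \leq 2\pi |\cdot|$ with $|t| \lesssim 2^j$ on $\mathrm{supp}\, \psi_j$ and $\|\psi_j\|_1 \lesssim 1$ produces the small-frequency bound $|H_j| \lesssim \|(x, y)\|_j$. For the reverse bound, I would split by which part of the phase $\phi(t) = xt^2 - yt$ dominates on $\mathrm{supp}\,\psi_j$: when $|y| \geq C|x| 2^j$, one has $|\phi'| \asymp |y|$ on the support, and one integration by parts gives $|H_j| \lesssim (2^j |y|)^{-1}$; when $|y| \leq c|x| 2^j$, the second derivative $\phi'' = 2x$ is large and van der Corput's second-derivative test, with $\|\psi_j\|_\infty + \|\psi_j'\|_1 \lesssim 2^{-j}$, yields $|H_j| \lesssim (2^{2j} |x|)^{-1/2}$. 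Both estimates are majorized by $\|(x, y)\|_j^{-1/2}$ once $\|(x, y)\|_j \geq 1$; for $\|(x, y)\|_j \leq 1$ the small-frequency bound already suffices.

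The substantive item is (ii), a standard minor-arcs bound. Given $(\lambda, \beta) \notin \mathfrak{M}_j$, apply Dirichlet's theorem to $\lambda$ with $N = 2^{2j}$ to obtain a reduced approximation $A/Q$ with $1 \leq Q \leq N$, $(A, Q) = 1$, and $|\lambda - A/Q| \leq (QN)^{-1}$. If $Q > 2^{6\epsilon j}$, then we are in the Weyl range for the quadratic exponential sum, and Weyl's inequality applied to $M_j$ (which is uniform in the linear coefficient $\beta$) yields a saving of order $Q^{-1/2 + o(1)} \lesssim 2^{-3\epsilon j + o(j)}$. If instead $Q \leq 2^{6\epsilon j}$, then $(A, Q) = 1$ automatically gives $(A, B', Q) = 1$ for every $B' \in \mathbb{Z}$, so the entire row of major boxes $\{\mathfrak{M}_j(A/Q, B'/Q)\}_{B'}$ lies in $\mathfrak{M}_j$; since $|\lambda - A/Q| \leq N^{-1} = 2^{-2j} \ll 2^{(\epsilon - 2)j}$, the hypothesis $(\lambda, \beta) \notin \mathfrak{M}_j$ forces $\beta$ to be at distance $> 2^{(\epsilon - 1)j}$ from every $B'/Q$. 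Decomposing $m = pQ + r$ in $M_j$ as in the proof of Lemma~\ref{MAJor} and extracting the inner sum over $p$ as an essentially unimodular-weighted Fourier transform of $\psi_j$ evaluated at frequency $\gtrsim Q|\eta_1| \gtrsim 2^{(\epsilon-1)j}$, the Schwartz decay of $\widehat{\psi}$ produces arbitrary polynomial decay, in particular $O(2^{-\epsilon j})$. The main obstacle, modest in nature, is choosing $\epsilon$ (and hence the exponent $6\epsilon$ in \eqref{e:Major}) compatibly with the Dirichlet window and the Weyl threshold so that a single $\delta(\epsilon) > 0$ works uniformly across all regimes; with $0 < \epsilon \leq 1/4$ this is a routine exercise.
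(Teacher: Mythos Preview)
Your approach matches the paper's: the paper simply cites Hua for (i), cites Bourgain for (ii), and for (iii) gives exactly your argument (oddness of $\psi$ for the small bound, van der Corput's second-derivative test when $2^{2j}|x| \gtrsim 2^j|y|$, and one integration by parts otherwise). So for (i) and (iii) there is nothing to add.

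For (ii) your outline is the standard one, but there is a small genuine slip in the numerics. With Dirichlet window $N = 2^{2j}$ the denominator $Q$ can be as large as $2^{2j}$, and Weyl's inequality for a quadratic sum of length $L = 2^j$ gives
\[
\bigl|M_j\bigr| \lesssim 2^{o(j)}\bigl(Q^{-1} + 2^{-j} + Q\,2^{-2j}\bigr)^{1/2},
\]
not $Q^{-1/2+o(1)}$ as you wrote; when $Q$ is near $2^{2j}$ the term $Q\,2^{-2j}$ is of order $1$ and there is no saving. The remedy is exactly the calibration you allude to at the end: take $N = 2^{(2-\epsilon)j}$. Then in the Weyl range $Q > 2^{6\epsilon j}$ one has $Q\,2^{-2j} \le 2^{-\epsilon j}$, so Weyl saves $2^{-\epsilon j/2 + o(j)}$; and in the range $Q \le 2^{6\epsilon j}$ one gets $|\lambda - A/Q| \le 1/N = 2^{(\epsilon-2)j}$, which is precisely the major-box threshold in $\lambda$, so your argument that $\beta$ must be far from every $B'/Q$ goes through. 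With that one change your sketch is correct. (In the $\beta$-far subcase, the cleanest way to make ``Schwartz decay of $\widehat\psi$'' precise is Abel summation in $p$, using that the geometric partial sums of $e(-\eta_1 pQ)$ are $\lesssim \|\eta_1 Q\|^{-1} \le 2^{(1-\epsilon)j}/Q$ while the total variation of $p \mapsto e(\eta_2(pQ+r)^2)\psi_j(pQ+r)$ is $\lesssim 2^{-j}$.)
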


\begin{proof}[Sketch of Proof]
The first estimate is a fundamental one on Gauss sums, see Hua \cite[\S 7, Theorem 10.1]{MR665428}. 

\medskip 
The second estimate can be deduced from   Bourgain's \cite[Lemma 5.6]{MR1019960}. 
Estimates of the latter type are well known, the subject of the Hardy-Littlewood method. 

\medskip 

The third estimate is simple oscillatory estimate.   It is clear that $ \lvert  H_j (x,y)\rvert $ is 
always bounded by a constant.   Assume that $ \|(x,y)\|_j $ is small. Then, since $ H (0,0)=0$, 
\begin{equation*}
 |H_j(x,y)| \lesssim \|(x,y)\|_j  \int  (2 ^{-2j}t ^2  + 2 ^{j} \lvert  t\rvert ) \lvert  \psi _{j} (t)\rvert \;dt 
 \lesssim \|(x,y)\|_j. 
\end{equation*}
If $ \|(x,y)\|_j $ is large, then the phase function has derivative $ 2 x t + y$, which has a critical 
point at $ t=- y/2x$. 
From the second van der Corput estimate, we have 
\begin{equation*}
 |H_j(x,y)| \lesssim  \left( \lVert \psi _j\rVert _{L^\infty } + \| \psi_j' \|_{L^1} \right) \lvert  x\rvert ^{-1/2} \lesssim [ 2 ^{2j} \lvert  x\rvert  ] ^{-1/2}.  
\end{equation*}
Provided $ 2 ^{2j} \lvert  x\rvert \geq c 2 ^{j} \lvert  y\rvert  $, that completes the proof of \eqref{e:Hj<}. 
If this fails, it follows that the phase function has derivative at least $c \lvert  y\rvert $ on the support 
of $ \psi _j$, so that the estimate follows from an integration by parts. 

\end{proof}

This Lemma is elementary in nature.  

\begin{lemma}\label{l:Lj<}  For $ \epsilon  > 0$ sufficiently small, and for all integers $ j$, 
\begin{equation}\label{e:Lj<}  
\lvert   L _{j} (\lambda , \beta )\rvert \lesssim 2 ^{ -  \epsilon j/3} \qquad 
(\lambda , \beta ) \not\in  \mathfrak M _{j} . 
\end{equation}
And, for $ 1\leq s \leq \epsilon j$, there holds 
\begin{equation}\label{e:LJ<}  
\sum_{ 1 \leq s' \neq s \leq \epsilon j}\lvert   L _{j,s'} (\lambda , \beta )\rvert \lesssim 2 ^{ -  \epsilon j/3} \qquad 
(\lambda , \beta ) \in \bigcup _{ (A/Q,B/Q)\in  \mathcal R_s}  \mathfrak M _{j} (A,B,Q).  
\end{equation}
\end{lemma}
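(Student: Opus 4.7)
The plan is to exploit three ingredients: (a) for each scale $s$, the cutoffs $\chi_s(\lambda - A/Q)\chi_s(\beta - B/Q)$ have essentially disjoint supports as $(A/Q,B/Q)$ ranges over $\mathcal R_s$, so $L_{j,s}(\lambda,\beta)$ reduces to at most a single summand; (b) the Gauss sum estimate \eqref{e:gauss<}, which yields the factor $Q^{-\nu}$ for any $\nu<1/2$; and (c) the decay $|H_j(x,y)| \lesssim \|(x,y)\|_j^{-1/2}$ from \eqref{e:Hj<} whenever $\|(x,y)\|_j$ is large. Disjointness is a spacing argument: for distinct pairs in $\mathcal R_s$ at least one coordinate differs by $\geq 1/(QQ') \geq 2^{-2s}$, while $\chi_s$ is supported on an interval of length $2/(5\cdot 10^s) < 2^{-2s}$ (since $5^{s+1} > 2^{s+1}$). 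Thus in both parts we are really bounding a single term of the form $|S(A/Q,B/Q)|\,|H_j(\lambda - A/Q,\beta - B/Q)| \lesssim 2^{-\nu s}\,|H_j(\cdot)|$.

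For part (i), assume $(\lambda,\beta) \notin \mathfrak M_j$. The surviving rational in $\mathcal R_s$ has $Q \leq 2^s \leq 2^{\epsilon j} \leq 2^{6\epsilon j}$, so $(A/Q,B/Q)$ is one of the centers featuring in the union \eqref{e:Major}, and hence $(\lambda,\beta)\notin \mathfrak M_j(A/Q,B/Q)$. This forces either $|\lambda-A/Q|>2^{(\epsilon-2)j}$ or $|\beta-B/Q|>2^{(\epsilon-1)j}$, and in either case $\|(\lambda-A/Q,\beta-B/Q)\|_j > 2^{\epsilon j}$, so \eqref{e:Hj<} gives $|H_j|\lesssim 2^{-\epsilon j/2}$. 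Multiplying by $2^{-\nu s}$ and summing over $1\leq s\leq \epsilon j$ yields $|L_j(\lambda,\beta)|\lesssim 2^{-\epsilon j/2}$, comfortably better than claimed.

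For part (ii), fix $(\lambda,\beta)\in \mathfrak M_j(A^*/Q^*,B^*/Q^*)$ with $(A^*/Q^*,B^*/Q^*)\in \mathcal R_s$, and let $s'\neq s$ in $[1,\epsilon j]$. The surviving rational $(A'/Q',B'/Q')\in \mathcal R_{s'}$ is distinct from $(A^*/Q^*,B^*/Q^*)$ as a point of $\TT^2$: a short $\gcd$ argument, using $\gcd(A,B,Q)=1$, shows that each element of $\TT^2$ has a unique reduced representative, so $\mathcal R_s$ and $\mathcal R_{s'}$ are disjoint. Consequently at least one of $|A^*/Q^*-A'/Q'|$ or $|B^*/Q^*-B'/Q'|$ is at least $1/(Q^*Q')\geq 2^{-s-s'}$. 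Since $(\lambda,\beta)$ lies within $2^{(\epsilon-2)j}$ of $A^*/Q^*$ in the first coordinate and $2^{(\epsilon-1)j}$ in the second, both much smaller than $2^{-s-s'}$ once $s+s'\leq 2\epsilon j$ and $\epsilon<1/3$, the triangle inequality produces
\begin{equation*}
\|(\lambda-A'/Q',\beta-B'/Q')\|_j \gtrsim 2^{j-s-s'}.
\end{equation*}
Combining \eqref{e:Hj<} with the Gauss sum bound at $\nu=1/4$ gives $|L_{j,s'}(\lambda,\beta)|\lesssim 2^{-s'/4}\cdot 2^{-(j-s-s')/2}$. Summing over $s'\in[1,\epsilon j]\setminus\{s\}$ and using $s\leq \epsilon j$ produces a total bound of order $2^{j(3\epsilon/4 - 1/2)}$, which is $\leq 2^{-\epsilon j/3}$ for $\epsilon$ sufficiently small (any $\epsilon\leq 6/13$ works).

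The one \emph{delicate} point is the separation analysis in part (ii): one must split according to whether the $\geq 2^{-s-s'}$ gap lies in the first or second coordinate, and verify in each case that the major-box translation error ($2^{(\epsilon-2)j}$ or $2^{(\epsilon-1)j}$) is dominated by it. Both dominations hold comfortably once $\epsilon$ is chosen well below $1/3$, after which the rest of the argument is bookkeeping on geometric series.
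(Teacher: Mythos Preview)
Your proof is correct and uses the same ingredients as the paper: disjoint supports of the $\chi_s$-cutoffs so that only one summand survives, and the decay estimate \eqref{e:Hj<} for $H_j$ once the relevant $\|(x,y)\|_j$ is large. Part (i) is essentially identical to the paper's argument (you additionally invoke the Gauss sum bound \eqref{e:gauss<} to make the sum in $s$ convergent, whereas the paper just absorbs the factor $\epsilon j$ into the exponent).

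The only organizational difference is in part (ii). The paper first isolates a single intermediate estimate, namely $|L_{j,s'}(\lambda,\beta)|\lesssim 2^{-\epsilon j/2}$ whenever $(\lambda,\beta)\notin\bigcup_{\mathcal R_{s'}}\mathfrak M_j(\cdot)$, and then observes that the pairwise disjointness of the major boxes (already established after \eqref{e:Major}) forces $(\lambda,\beta)\in\mathfrak M_j(A^*/Q^*,B^*/Q^*)$ to lie outside every box indexed by $\mathcal R_{s'}$, $s'\neq s$; both \eqref{e:Lj<} and \eqref{e:LJ<} then follow by summing the same intermediate estimate over at most $\epsilon j$ values of $s$. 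You instead redo the separation computation directly for $\mathcal R_s$ versus $\mathcal R_{s'}$, obtaining the sharper lower bound $\|(\lambda-A'/Q',\beta-B'/Q')\|_j\gtrsim 2^{j-s-s'}$ and then summing with the aid of the Gauss sum factor. Both routes are valid; the paper's is a bit more economical since it recycles one inequality, while yours makes the quantitative dependence on $s,s'$ more explicit.
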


\begin{proof}
We show this inequality, with a better exponent.   For $ 1\leq s \leq \epsilon j$, there holds 
\begin{equation}\label{e:Ljs<}  
\lvert   L _{j,s} (\lambda , \beta )\rvert \lesssim 2 ^{ -  \epsilon j/2} \qquad 
(\lambda , \beta ) \not\in \bigcup _{ (A/Q,B/Q)\in  \mathcal R_s}  \mathfrak M _{j} (A,B,Q).  
\end{equation}
One should recall the definition of the major box in \eqref{e:major},  as well as the definition of $ L _{j,s}$ in \eqref{e:Ljs}. 
As, $ 0< \epsilon <1$ is small,   and $ 0 < s < \epsilon j$, 
we should consider the case of $  100 ^{-s+1} \geq  2 ^{(\epsilon -1) j}$. 

There are two factors that come into play. 
The first is that the cut-off functions that enter into the definition of $ L _{j,s}$ are disjointly supported.  
Namely, for $ (A, B,Q) \neq (A',B',Q') \in \mathcal R_s$, there holds 
\begin{equation*}
\chi_s(\lambda- A/Q) \chi_s(\beta - B/Q) \cdot \chi_s(\lambda- A'/Q') \chi_s(\beta - B'/Q') \equiv  0. 
\end{equation*}
We omit this argument, as we have already used a variant in discussing the disjoint union in \eqref{e:Major}. 
The second is the decay estimate \eqref{e:Hj<}.  It implies in particular that 
provided $ (\lambda , \beta ) \not\in \mathfrak M _{j} (A,B,Q)$, 
\begin{align*}
\lvert  H _{j} (\lambda , \beta )\rvert  &\chi_s(\lambda- A/Q) \chi_s(\beta - B/Q) 
\\&
\lesssim   [ 2 ^{2j} \lvert  \lambda -A/Q\rvert + 2 ^{j} \lvert  \beta -B/Q\rvert] ^{-1/2}   
\lesssim  2 ^{- \epsilon j /2}, 
\end{align*}
The proof  of \eqref{e:Ljs<} is complete. 

\smallskip 

To see that \eqref{e:Lj<} holds, from the definition in \eqref{e:Lj}, we have 
\begin{equation*}
\lvert   L _{j} (\lambda , \beta )\rvert \leq 
\sum_{ 1 \leq s \leq \epsilon j}\lvert   L _{j,s} (\lambda , \beta )\rvert , 
\end{equation*}
and so we can use the estimate \eqref{e:Ljs<} at most $ \epsilon j$ times.  The same argument applies to 
\eqref{e:LJ<}.  

\end{proof}

\begin{proof}[Proof of Theorem~\ref{t:smooth}] 
Recalling the definition of $ M_j$ in \eqref{e:Mjdef} and $ L_j$ in \eqref{e:Lj}, we should check that 
\begin{equation*}
E_j (\lambda , \beta ) := M_j (\lambda , \beta ) - L _{j} (\lambda , \beta ) 
\end{equation*}
satisfy the conclusions of the Lemma, namely the size condition \eqref{e:E<} and the derivative condition 
\eqref{e:dE<}.  
For $ (\lambda , \beta ) \not\in \mathfrak M_j$,  combine the two inequalities \eqref{e:minor} and \eqref{e:Lj<} to see that 
\begin{equation*}
\lvert  E_j (\lambda , \beta )  \rvert   \lesssim 2 ^{- \eta  j}  
\end{equation*}
where $ \eta = \eta (\epsilon ) = \min \{\delta (\epsilon ), \epsilon /3\}$.  

Therefore, it remains to consider the case of $ (\lambda , \beta ) \in \mathfrak M_j $, 
whence $ (\lambda , \beta ) \in \mathfrak M_j (A/Q, B/Q) $ for a unique $ (A/Q,B/Q)\in  \mathcal R_s$, 
and $ 1\leq s \leq 3\epsilon j$.  
This condition means that $ \lvert  \lambda -A/Q\rvert \leq  2 ^{(\epsilon -2)j} $ and $ \lvert  \beta - B/Q\rvert \leq  
2 ^{ (\epsilon -1) j}$.  By \eqref{e:smooth}, we have 
\begin{equation}
M_j(\lambda, \beta) = S(A/Q,B/Q) H_j(\lambda - A/Q, \beta - B/Q) + O(2^{(3\epsilon -1 )j })
\end{equation}
The first term on the right equals $ L _{j,s} (\lambda , \beta )$ if  there holds 
\begin{equation}  \label{e:10}
 \lvert  \lambda -A/Q\rvert \leq  10 ^{-s+1} , \quad \lvert  \beta - B/Q\rvert \leq  
10 ^{ -s+1}
\end{equation}
by inspection of the definition of $ L _{j,s}$ in \eqref{e:Ljs}.   
Now, note that  since we only know $ 1\leq s \leq \epsilon j $, and we can take $ 0 < \epsilon <1$ small, 
these conditions are weaker than $  (\lambda , \beta ) \in \mathfrak M_j (A/Q, B/Q) $, and so 
\begin{equation*}
 S(A/Q,B/Q) H_j(\lambda - A/Q, \beta - B/Q) = L _{j,s} (\lambda , \beta ).  
\end{equation*}
It remains to account for the $ L _{j,s'} (\lambda , \beta )$, where $ s'\neq s$. But this is the inequality \eqref{e:LJ<}.

To conclude, the derivative condition is entirely elementary.  
The derivative with respect to $ \lambda $  of $M_j (\lambda , \beta )  $ is clearly dominated by $ 2 ^{2j}$, 
while that of $ L_j$ is dominated by $ 2 ^{2j}+10^{2s}$, where $ 1\leq s \leq \epsilon j$.  So it is dominated 
by $ 2 ^{2j}$, provided $ 0<\epsilon $ is small enough, as we assume.

\end{proof}

\section{Completing the Proof of Theorem~\ref{t:main}}

We combine the results of the previous sections to prove Theorem~\ref{t:main}. 
Our task is to bound 
\begin{equation*}
\Bigl\lVert 
 \sup_{\lambda \in \Lambda} \bigl\lvert  \F^{-1}_{\beta}  \bigl(M (\lambda , \beta ) \hat{f}(\beta) \bigr)\bigr\rvert \, 
\Bigr\rVert_2 
\end{equation*}
where $ M (\lambda , \beta )$ in \eqref{e:Mlb} is viewed as a Fourier multiplier 
on $ \mathbb R $, and supported on $ 0< \beta <1 $, say. 
Recall Theorem~\ref{t:smooth}, so that we have a decomposition 
\begin{equation*}
M (\lambda , \beta ) = L (\lambda , \beta ) + E (\lambda , \beta ) 
\end{equation*}
meeting the conditions involved in that Theorem.  
Moreover, $ \Lambda $ satisfies our arithmetic Minkowski dimension condition, 
as specified in Definition~\ref{d:amk}.  
We require that the  dimension be at most $ 0< d _{\Lambda } = d _{\epsilon,  \delta  } <1 $, 
where $ 0< \epsilon, \delta < 1 $ is as in Theorem~\ref{t:smooth}.  

The dimension will enter in two different ways. The first is the elimination of the `error terms' $ E_j (\lambda , \beta )$ 
in \eqref{e:E1}.  This step only requires a Minkowski dimension assumption. 
Recall the quantitative estimates \eqref{e:E<},  \eqref{e:dE<} on the $ E_j$. Combine them with 
Lemma~\ref{SOB}.  In particular, the assumption \eqref{e:SOB} holds with $ a \approx 2 ^{- \delta j}$, 
and $ A \approx 2 ^{2j}$.    Assuming that the Minkowski dimension of $ \Lambda $ is at most $ d _{\Lambda } \leq \frac \delta2  $, 
we can apply \eqref{e:tau} to conclude that 
\begin{equation*}
\Bigl\lVert 
 \sup_{\lambda \in \Lambda} \bigl\lvert \bigl(
  \F^{-1}_{\beta}  E_j (\lambda , \beta ) \hat{f}(\beta) 
 \bigr) \bigr\rvert \, 
\Bigr\rVert_2 \lesssim 2 ^{- \delta j/4} \lVert f\rVert_2 . 
\end{equation*}
This is summable in $ j\in \mathbb N $.  Hence it remains to consider $ L (\lambda , \beta )$.  

\medskip 

Recall that $ L (\lambda , \beta ) $ is defined in \eqref{e:Lj}.  That definition was formulated for the 
deduction of Theorem~\ref{t:smooth}, and our purposes now are a bit different. 
Instead of the representation in \eqref{e:Lj}, we write $ L (\lambda , \beta ) = \sum_{s\in \mathbb N } L ^{s} (\lambda , \beta )$, 
with 
\begin{gather}\label{e:Ls}
\begin{split}
L ^{s} (\lambda , \beta ) :=
\sum_{ (A/Q,B/Q)\in  \R_s} S(A/Q,B/Q)  &H^s(\lambda-A/Q,\beta-B/Q)
\\ &\times \chi _s(\lambda - A/Q) \chi _s(\beta - B/Q), 
\end{split}
\end{gather}
where $ \mathcal R_s$ is as in \eqref{e:Rs},  $ S (A/Q, B/Q)$ is the Gauss sum as in \eqref{e:gauss},   
\begin{equation*}
H ^{s} (\lambda , \beta ) = \sum_{j \;:\; s \leq \epsilon j} H _{j} (\lambda , \beta )
\end{equation*}
where $ 0< \epsilon <1$ is as in Theorem~\ref{t:smooth}, and $ H_j$ is defined in \eqref{e:Hj}, and 
the function $ \chi  _s$ is as in \eqref{e:Ljs}.  

As the estimate in this Lemma is summable in $ s   \geq 0$, it is the last step in the proof of our main theorem.  
\begin{lemma}\label{l:L}
There is a  $ 0 < \kappa < 1$, so that for  all integers $ s$, 
\begin{equation}\label{e:L<}
\Bigl\lVert 
 \sup_{\lambda \in \Lambda} \bigl\lvert \bigl(
  \F^{-1}_{\beta}  L^s (\lambda , \beta ) \hat{f}(\beta) 
 \bigr) \bigr\rvert \, 
\Bigr\rVert_2 \lesssim   2 ^{- \kappa  s} \lVert f\rVert_2 . 
\end{equation}
Above, the implied  constant depends only on  $ C _{\Lambda } $ is as in Definition~\ref{d:amk}.  
\end{lemma}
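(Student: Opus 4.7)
I would transfer to $\mathbb R$ via Lemma~\ref{l:transference} and then cover $\Lambda$ using the arithmetic Minkowski dimension at scale $\sim 10^{-s}$, obtaining $N_s\lesssim 10^{2sd}$ intervals $I_l$ of length $t_s := c\cdot 10^{-s}$ centered at reduced rationals $a_l/q_l$ with $q_l\lesssim 10^{sd}$. Bounding $\sup_{\lambda\in\Lambda}|\cdots|$ by $(\sum_l \sup_{\lambda\in I_l}|\cdots|^2)^{1/2}$ loses a factor $\sqrt{N_s}\lesssim 10^{sd}$; to absorb this loss against the gain from \eqref{e:gauss<}, it suffices to prove a per-interval bound of the form $s^2\cdot 2^{-\nu s}\|f\|_2$ for some $\nu<\tfrac12$, since then the total is $\lesssim s^2\cdot 2^{s(d\log_2 10 - \nu)}\|f\|_2\lesssim 2^{-\kappa s}\|f\|_2$ whenever $d<\nu/\log_2 10$ (a condition compatible with the earlier $d\leq \delta/2$ from the elimination of $E_j$).

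On each $I_l$, the cutoff $\chi_s(\lambda-A/Q)$ and $\lambda\in I_l$ force $|A/Q - a_l/q_l|\lesssim 10^{-s}$. For $d$ small enough this is strictly less than $1/(q_l Q)$ for every $Q\leq 2^s$, so the only pairs $(A/Q,B/Q)\in\mathcal R_s$ that contribute are those with $A/Q = a_l/q_l$, parameterized by $Q=q_l m$, $A=a_l m$ with $q_l m\in[2^{s-1},2^s)$ and $\gcd(m,B)=1$. Writing $\eta := \lambda - a_l/q_l$ and $\theta_{m,B} := B/(q_l m)$, the operator on $I_l$ reduces to $\chi_s(\eta)\sum_{m,B} S_{m,B}\,\textup{Mod}_{\theta_{m,B}}\,T^s_\eta(\tilde f_{m,B})$, where $\tilde f_{m,B}$ is the $\chi_s$-localization of $f$ in frequency near $\theta_{m,B}$, and $T^s_\eta$ has multiplier $H^s(\eta,\cdot)$. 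To invoke Theorem~\ref{KEY}, introduce the loaded function
\[
 g_l := \F^{-1}\Bigl(\sum_{m,B} S_{m,B}\,\chi_s(\cdot - \theta_{m,B})\,\hat f\Bigr).
\]
Since $10^{-s}<2^{-2s}$, the $\chi_s$-supports are pairwise disjoint, so \eqref{e:gauss<} gives $\|g_l\|_2 \lesssim 2^{-\nu s}\|f\|_2$ for any $\nu<\tfrac12$. A dyadic decomposition $H^s=\sum_{j\geq s/\epsilon} H_j$ with $\tau_j\sim 2^{-j}$ on each piece reconciles the constraint $|\eta|\leq \tau^2$ of Theorem~\ref{KEY} with the $\chi_s$-range $|\eta|\leq 10^{-s}/5$; per $j$, Theorem~\ref{KEY} yields $\lesssim j^2\|g_l\|_2$, and summation over $j\geq s/\epsilon$, using the off-band decay \eqref{e:Hj<}, produces the desired per-interval bound $s^2\cdot 2^{-\nu s}\|f\|_2$.

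\textbf{Main obstacle.} The technical heart of the proof is the last step. The $\eta$-window $[-10^{-s}/5,10^{-s}/5]$ does not fit inside the admissibility window $[0,\tau^2]$ of Theorem~\ref{KEY} when $\tau$ is taken at the natural scale $2^{-s/\epsilon}$ of the truncation $k_0=s/\epsilon$ in $H^s$; this forces the per-$j$ dyadic decomposition and a careful reassembly via \eqref{e:Hj<}. A second source of friction is that the sharp cutoff $\chi_s$ in $L^s$ is strictly finer than the smoother $\hat\varphi_\tau$ in Theorem~\ref{KEY}, generating cross-terms between distinct $(m,B)$-pairs when one tries to match the two multiplier structures; these must also be controlled by \eqref{e:Hj<}. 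Finally, distinct representations $Q=q_l m$ of the same rational $a_l/q_l$ yield frequencies $\theta_{m,B}$ that are only $\sim 2^{-2s}/q_l^2$-separated across different $m$'s (even though they are $\geq 1/Q \sim 2^{-s}$-separated within a fixed $m$), so one must either cluster the frequencies by $m$-value before each application of Theorem~\ref{KEY} or appeal to a refined variant allowing the finer separation; either way, the clustering loss is absorbed by the Gauss-sum gain $2^{-\nu s}$ thanks to the arithmetic smallness $q_l\lesssim 10^{sd}$.
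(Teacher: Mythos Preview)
Your Plan is exactly the paper's argument: cover $\Lambda$ at scale $\sim10^{-s}$ using the arithmetic Minkowski hypothesis, note that on each covering interval the cutoff $\chi_s(\lambda-A/Q)$ forces $A/Q=r_n$, load the Gauss sums into a function $g_n$ with $\|g_n\|_2\lesssim 2^{-\nu s}\|f\|_2$, and invoke Theorem~\ref{KEY} once (with $\tau\approx 2^{-2s}$ and $k_0=\lceil s/\epsilon\rceil$) to get the factor $s^2$; then absorb the $\ell^2$-loss $10^{sd}$ over the covering intervals by taking $d$ small.

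Your ``Main obstacle'' section, however, manufactures difficulties that are not present, and the fixes you propose are shakier than the direct route. Regarding~(a): the restriction $\lambda\le\tau^2$ in the \emph{statement} of Theorem~\ref{KEY} plays no role in its \emph{proof}---the decomposition by $l$ via $\lambda\,2^{2k-l}\in[1,2)$ and the treatment of the four cases go through for all $\lambda>0$---so the paper simply applies the theorem once with $\tau\approx 2^{-2s}$ and $|\eta|\lesssim 10^{-s}$. Your per-$j$ workaround is both unneeded and, as written, does not close: an application of Theorem~\ref{KEY} at a single scale yields $\lesssim s^2\|g_l\|_2$ (the $\log^2 N$ factor is governed by the number of frequencies, not by $j$), and summing this over infinitely many $j\ge s/\epsilon$ diverges; the estimate \eqref{e:Hj<} is a pointwise bound on the multiplier and does not by itself control the maximal operator uniformly as $\eta\to0$. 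Regarding~(b): $\chi_s$ and $\hat\varphi_\tau$ are smooth cutoffs at compatible scales, and passing between them is routine. Regarding~(c): your separation computation is inverted---distinct values $B/(q_lm)$ and $B'/(q_lm')$ differ by at least $1/(q_l m m')\ge q_l\,2^{-2s}$, which \emph{exceeds} $2^{-2s}$, so no clustering by $m$ is needed.
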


\begin{proof}
Cover $ \Lambda $ by intervals $ \Lambda _1 ,\dotsc, \Lambda _N$ of width $   10 ^{-s+1}$, 
each $ \Lambda _n$ centered at a rational  $ r_n$ with denominator at most $ C _{\Lambda } 10 ^{d (s-1)}$.  
As follows from Definition~\ref{d:amk}, the number of intervals $ N$ is at most $ C _{\Lambda } 10 ^{2ds}$.  
It suffices to prove that  uniformly in $ 1\leq n \leq N$, 
\begin{equation}\label{e:LL<}
\Bigl\lVert 
 \sup_{\lambda \in \Lambda_n} \bigl\lvert \bigl(
  \F^{-1}_{\beta}  L^s (\lambda , \beta ) \hat{f}(\beta) 
 \bigr) \bigr\rvert \, 
\Bigr\rVert_2 \lesssim  2 ^{- 2 s/7} \lVert f\rVert_2 . 
\end{equation}
If $ 0< d < 1$ is sufficiently small,  we can then just use the triangle inequality 
to complete the proof.

For $ 1\leq n \leq N$, define  
\begin{equation*}
\begin{split}
L_n  (\lambda , \beta ) 
 :=
\sum_{ \substack{B \;:\;  (A/Q,B/Q)\in  \R_s\\ A/Q =r_n }} S(r_n,B/Q) & H^s(\lambda-r_n,\beta-B/Q)
\\& \times 
\chi _s(\lambda - r_n) \chi _s(\beta - B/Q). 
\end{split}
\end{equation*}
Above, we have added the restriction that $ A/Q= r_n$.  The essential point is that 
in \eqref{e:LL<}, we can replace  $ L ^{s} (\lambda , \beta )$ by $ L_n (\lambda , \beta )$. 
And then, the inequality \eqref{e:LL<} is  a consequence of  the key maximal inequality of Theorem~\ref{KEY} and the Gauss sum 
estimate \eqref{e:gauss<}, as we now explain.

To be explicit, set 
\begin{equation*}
\widehat  g _n(\beta )=
\sum_{ \substack{B \;:\;  (A/Q,B/Q)\in  \R_s\\ A/Q =r_n }} 
S(r_n,B/Q)    \widehat f (\beta ) \cdot \mathbf 1_{  \lvert  \beta - B/Q \rvert < 10 ^{-s} } . 
\end{equation*}
From the Gauss sum estimate \eqref{e:gauss<},  we have $ \lVert g_n\rVert_2 \lesssim 2 ^{-3s/7} \lVert f\rVert_2$.  
But, we also have 
\begin{equation*}
 L_n  (\lambda , \beta ) \hat{f}(\beta)  = 
\sum_{ \substack{B \;:\;  (A/Q,B/Q)\in  \R_s\\ A/Q =r_n }}   H^s(\lambda-A/Q,\beta-B/Q) \widehat g_n (\beta ).  
\end{equation*}
Moreover, the term on the right above is an instance of the terms in the maximal inequality of 
Theorem~\ref{KEY}, with $ N \approx 2 ^{s}$, and the separation parameter $ \tau \approx 2 ^{-2s}$. 
Therefore, it is a consequence of that Theorem that we have 
\begin{equation*}
\Bigl\lVert 
\sup _{ \lambda \in \Lambda _n } 
\Bigl\lvert 
\sum_{ \substack{B \;:\;  (A/Q,B/Q)\in  \R_s\\ A/Q =r_n }}  H^s(\lambda-A/Q,\beta-B/Q) \widehat g_n (\beta ) 
\Bigr\rvert
\Bigr\rVert_2 \lesssim s ^{2}  \lVert g_n\rVert_2. 
\end{equation*}
This estimate is uniform in $ 1\leq n \leq N$, hence \eqref{e:LL<} follows. 

\end{proof}


\begin{bibsection}

\begin{biblist}

\bib{MR922412}{article}{
   author={Arkhipov, G. I.},
   author={Oskolkov, K. I.},
   title={A special trigonometric series and its applications},
   language={Russian},
   journal={Mat. Sb. (N.S.)},
   volume={134(176)},
   date={1987},
   number={2},
   pages={147--157, 287},
   translation={
      journal={Math. USSR-Sb.},
      volume={62},
      date={1989},
      number={1},
      pages={145--155},
   },
}

\bib{MR937581}{article}{
   author={Bourgain, J.},
   title={On the maximal ergodic theorem for certain subsets of the
   integers},
   journal={Israel J. Math.},
   volume={61},
   date={1988},
   number={1},
   pages={39--72},
}

\bib{MR937582}{article}{
   author={Bourgain, J.},
   title={On the pointwise ergodic theorem on $L\sp p$ for arithmetic sets},
   journal={Israel J. Math.},
   volume={61},
   date={1988},
   number={1},
   pages={73--84},
}

\bib{MR1019960}{article}{
   author={Bourgain, Jean},
   title={Pointwise ergodic theorems for arithmetic sets},
   note={With an appendix by the author, Harry Furstenberg, Yitzhak
   Katznelson and Donald S. Ornstein},
   journal={Inst. Hautes \'Etudes Sci. Publ. Math.},
   number={69},
   date={1989},
   pages={5--45},
}

\bib{MR0199631}{article}{
   author={Carleson, Lennart},
   title={On convergence and growth of partial sums of Fourier series},
   journal={Acta Math.},
   volume={116},
   date={1966},
   pages={135--157},
}

\bib{MR665428}{book}{
   author={Hua, Loo Keng},
   title={Introduction to number theory},
   note={Translated from the Chinese by Peter Shiu},
   publisher={Springer-Verlag, Berlin-New York},
   date={1982},
   pages={xviii+572},
   isbn={3-540-10818-1},
   review={\MR{665428}},
}

\bib{MR2188130}{article}{
   author={Ionescu, Alexandru D.},
   author={Wainger, Stephen},
   title={$L\sp p$ boundedness of discrete singular Radon transforms},
   journal={J. Amer. Math. Soc.},
   volume={19},
   date={2006},
   number={2},
   pages={357--383 (electronic)},
}

\bib{MR2318564}{article}{
   author={Ionescu, Alexandru D.},
   author={Stein, Elias M.},
   author={Magyar, Akos},
   author={Wainger, Stephen},
   title={Discrete Radon transforms and applications to ergodic theory},
   journal={Acta Math.},
   volume={198},
   date={2007},
   number={2},
   pages={231--298},
}


\bib{14021803}{article}{
Author = {Krause, Ben},
Title = {Polynomial Ergodic Averages Converge Rapidly: Variations on a Theorem of Bourgain},
Year = {2014},
Eprint = {arXiv:1402.1803},
}

\bib{MR2545246}{article}{
   author={Lie, Victor},
   title={The (weak-$L\sp 2$) boundedness of the quadratic Carleson
   operator},
   journal={Geom. Funct. Anal.},
   volume={19},
   date={2009},
   number={2},
   pages={457--497},
}

\bib{11054504}{article}{
   author={Lie, Victor},
    title =     {The Polynomial Carleson Operator},
   eprint = {http://arxiv.org/abs/1105.4504},
}


\bib{151207524}{article}{
Author = {Mirek, Mariusz},
Title = {Square function estimates for discrete Radon transforms},
Year = {2015},
Eprint = {arXiv:1512.07524},
}

\bib{151207523}{article}{
   author={Mirek, Mariusz},
   author={Trojan, Bartosz},
   author={Stein, E. M.},
Title = {$L^p(\mathbb Z ^d)$-estimates for discrete operators of Radon type: Variational estimates},
Eprint = {arXiv:1512.07523},
}

\bib{151207518}{article}{
   author={Mirek, Mariusz},
   author={Trojan, Bartosz},
   author={Stein, E. M.},
Title = {$L^p( \mathbb Z ^d)$-estimates for discrete operators of Radon type: Maximal functions and vector-valued estimates},
Eprint = {arXiv:1512.07518},
}

\bib{MR3370012}{article}{
   author={Mirek, Mariusz},
   author={Trojan, Bartosz},
   title={Cotlar's ergodic theorem along the prime numbers},
   journal={J. Fourier Anal. Appl.},
   volume={21},
   date={2015},
   number={4},
   pages={822--848},
}

\bib{MR1650976}{article}{
   author={Oskolkov, K.},
   title={Schr\"odinger equation and oscillatory Hilbert transforms of
   second degree},
   journal={J. Fourier Anal. Appl.},
   volume={4},
   date={1998},
   number={3},
   pages={341--356},
}


\bib{MR3241850}{article}{
   author={Oskolkov, K. I.},
   author={Chakhkiev, M. A.},
   title={Traces of the discrete Hilbert transform with quadratic phase},
   language={Russian, with Russian summary},
   journal={Tr. Mat. Inst. Steklova},
   volume={280},
   date={2013},
   number={Ortogonalnye Ryady, Teoriya Priblizhenii i Smezhnye Voprosy},
   pages={255--269},
   translation={
      journal={Proc. Steklov Inst. Math.},
      volume={280},
      date={2013},
      number={1},
      pages={248--262},
   },
}

%

\bib{MR2872554}{article}{
   author={Pierce, Lillian B.},
   title={Discrete fractional Radon transforms and quadratic forms},
   journal={Duke Math. J.},
   volume={161},
   date={2012},
   number={1},
   pages={69--106},
}

\bib{150503882}{article}{
author = {Pierce, L. B.}, 
author={ Yung, Po-Lam},
Title = {A polynomial Carleson operator along the paraboloid},
Year = {2015},
Eprint = {1505.03882},
}

\bib{MR822187}{article}{
   author={Ricci, F.},
   author={Stein, E. M.},
   title={Oscillatory singular integrals and harmonic analysis on nilpotent
   groups},
   journal={Proc. Nat. Acad. Sci. U.S.A.},
   volume={83},
   date={1986},
   number={1},
   pages={1--3},
   review={\MR{822187}},
}

\bib{MR1364908}{article}{
   author={Stein, Elias M.},
   title={Oscillatory integrals related to Radon-like transforms},
   booktitle={Proceedings of the Conference in Honor of Jean-Pierre Kahane
   (Orsay, 1993)},
   journal={J. Fourier Anal. Appl.},
   date={1995},
   number={Special Issue},
   pages={535--551},
}

\bib{MR1056560}{article}{
   author={Stein, E. M.},
   author={Wainger, S.},
   title={Discrete analogues of singular Radon transforms},
   journal={Bull. Amer. Math. Soc. (N.S.)},
   volume={23},
   date={1990},
   number={2},
   pages={537--544},
}

\bib{MR1719802}{article}{
   author={Stein, Elias M.},
   author={Wainger, Stephen},
   title={Discrete analogues in harmonic analysis. I. $l\sp 2$ estimates for
   singular Radon transforms},
   journal={Amer. J. Math.},
   volume={121},
   date={1999},
   number={6},
   pages={1291--1336},
}

\bib{MR1879821}{article}{
   author={Stein, Elias M.},
   author={Wainger, Stephen},
   title={Oscillatory integrals related to Carleson's theorem},
   journal={Math. Res. Lett.},
   volume={8},
   date={2001},
   number={5-6},
   pages={789--800},
}

\bib{MR3280058}{article}{
   author={Zorin-Kranich, Pavel},
   title={Variation estimates for averages along primes and polynomials},
   journal={J. Funct. Anal.},
   volume={268},
   date={2015},
   number={1},
   pages={210--238},
}

\end{biblist}
\end{bibsection}

\end{document}